 \theoremstyle{plain}
 \newtheorem{thm}{Theorem}[section]
 \newtheorem{cor}[thm]{Corollary}
 \newtheorem{lem}[thm]{Lemma}
 \newtheorem{prop}[thm]{Proposition}
 \theoremstyle{definition}
 \newtheorem{defn}[thm]{Definition}
 \theoremstyle{remark}
 \newtheorem{rem}[thm]{Remark}
\numberwithin{equation}{section}
 \newcommand{\set}[1]{\left\{#1\right\}}
 \newcommand{\p}[1]{\left(#1\right)}
 \newcommand{\CC}{\mathbb{C}}
 \newcommand{\QQ}{\mathbb{Q}}
 \newcommand{\ZZ}{\mathbb{Z}}
 \newcommand{\FF}{\mathbb{F}}
 \newcommand{\PP}{\mathbb{P}}
 \renewcommand{\AA}{\mathbb{A}}
 \DeclareMathOperator{\Gal}{Gal}
 \DeclareMathOperator{\Aut}{Aut}
 \DeclareMathOperator{\SL}{SL}
 \DeclareMathOperator{\PSL}{PSL}
 \DeclareMathOperator{\Sp}{Sp}
 \DeclareMathOperator{\GL}{GL}
 \newcommand{\kbar}{\overline{k}}
 \newcommand{\kw}{k(\underline{w})}
 \newcommand{\kf}{k(\underline{f})}
 \newcommand{\id}{\mathrm{id}} 
 \newcommand{\charact}{\mathrm{char}}
 \DeclareMathOperator{\Pic}{Pic}
 \DeclareMathOperator{\disc}{disc}
 \DeclareMathOperator{\Jac}{Jac}
 \DeclareMathOperator{\Norm}{Norm}
 \newcommand{\MM}{\mathcal{M}_2}
  \newcommand{\LL}{\mathcal{L}}
\newcounter{nootje}
\newcommand\noot[1]
\begin{document}
\title{The Arithmetic of Genus Two Curves with (4,4)-Split Jacobians}
\author{Nils Bruin}
\address{Department of Mathematics,
         Simon Fraser University,
         Burnaby, BC,
         Canada V5A 1S6}
\email{nbruin@sfu.ca}
\thanks{Research supported by NSERC}
\author{Kevin Doerksen}
\address{Department of Mathematics,
         Simon Fraser University,
         Burnaby, BC,
         Canada V5A 1S6}
\email{kdoerkse@sfu.ca}
\date{April 21, 2011}
\subjclass{Primary 11G30; Secondary 14H40.}

\begin{abstract}
In this paper we study genus $2$ curves whose Jacobians admit a polarized $(4,4)$-isogeny to a product of elliptic curves. We consider base fields of characteristic different from $2$ and $3$, which we do not assume to be algebraically closed.
We obtain a full classification of all principally polarized abelian surfaces that can arise from gluing two elliptic curves along their $4$-torsion and we derive the relation their absolute invariants satisfy.

As an intermediate step, we give a general description of Richelot isogenies between Jacobians of genus $2$ curves, where previously only Richelot isogenies with kernels that are pointwise defined over the base field were considered.

Our main tool is a Galois theoretic characterization of genus $2$ curves admitting multiple Richelot isogenies.

\end{abstract}
\maketitle

\section{Introduction}
Let $k$ be a field and let $C$ be a curve of genus $2$ over $k$.
Let $J=\Jac(C)$ be its Jacobian. The abelian variety is called \emph{decomposable over $k$} if $J$ is isogenous over $k$ to a product of elliptic curves $E_1\times E_2$.

A genus $2$ curve has a decomposable Jacobian if and only if there is a cover $\phi_1\colon C\to E_1$ to an elliptic curve $E'$. If we take $\phi_1$ to be \emph{optimal} (minimal degree would do), this gives rise to a complementary cover $\phi_2\colon C\to E_2$ and an isogeny of a special type
\[\Phi\colon E_1\times E_2\to \Jac(C),\]
which we call an \emph{optimal $(n,n)$-splitting} (Definition~\ref{D:nnsplitting}). The construction is also referred to as \emph{gluing $E_1$ and $E_2$ along their $n$-torsion} and specifying $\Phi$ is equivalent to specifying a Weil-pairing inverting isomorphism $\alpha\colon E_1[n]\to E_2[n]$.

There is a considerable literature on $(n,n)$-splittings, often in the language of elliptic subcovers and mainly dealing with algebraically closed base fields. The first general examples for $n=2$ were given by Legendre and Jacobi (1832). Later Bolza (1887) considered $n=3$ and $n=4$ (see \cite[pp.~477, 480]{krazer}). In recent years, these results have been reconsidered and extended, mainly over an algebraically closed field. For $n=2$ see \cite[Ch.~14]{MR1406090}, for $n=3$ see \cite{MR1363496, MR936803, MR2041085, MR2039100} and for $n=5$ see \cite{MR1363496, degree5}.

In this paper we are concerned with $n=4$. We compare our results to Bolza's \cite{Bolza1887} in Appendix~\ref{S:Bolza}.
Our methods require that the covers and isogenies we consider are separable, so we need that our base field $k$ is not of characteristic $2$. To simplify our computations we sometimes also assume that $\charact(k)\neq 3$ and that $\#k> 5$, but this is not essential for the methods we employ.

One significant advantage of considering optimal $(n,n)$-splittings rather than optimal degree-$n$ covers $\phi\colon C\to E$ is that the codomain of an $(n,n)$-splitting need not be a Jacobian, which means that boundary cases can be treated more uniformly.
Our main result classifies all $(4,4)$-splittings.
\begin{thm}\label{T:44splitclassification}
Let $J$ be a principally polarized abelian surface over a field $k$ with $\charact(k)\nmid 6$ and $\#k > 5$. Then $J$ admits an optimal $(4,4)$-splitting
\[
 \Phi_4\colon E_1\times E_2\to J
\]
if and only if one of the following holds.
\begin{enumerate}
 \item $J=\Jac(C_4)$ where $C_4$ is a genus $2$ curve admitting a model of the form given in Appendix~\ref{S:C4},
 \item $J=\Jac(C_4')$ and $E_2=E_1^{(D)}$, where $D=\disc(E_1)$, with
\[
C_4'\colon Y^2=-64bc\frac{1}{D^3}X^6+\frac{64}{3}b\frac{1}{D^2}X^5+16bc\frac{1}{D^2}X^4+\frac{224}{27}b\frac{1}{D}X^3+
4bc\frac{1}{D}X^2+\frac{4}{3}bX-bc,
\]

 \item $J=E_1\times E_2$ and there is a $3$-isogeny $E_1\to E_2$,
 \item $J=E_1/\langle T_2\rangle\times E_1/\langle T_3\rangle$, where $E_1=E_2$ is an elliptic curve with $E_1[2](k)=\{0,T_1,T_2,T_3\}$
 \item $J=\Re_{k(\sqrt{D})/k}(E_1/\langle T_2\rangle)$, where $D=\disc(E_1)$ is a non-square, $E_1[2](k)=\{0,T_1\}$ and $E_1[2](k(\sqrt{D}))=\{0,T_1,T_2,T_3\}$ and $E_2=E_1^{(D)}$.
\end{enumerate}
\end{thm}
The model $C_4'$ can be obtained as an appropriate specialization of a model for $C_4$.

Combined with the degree $4$ covers explained in Appendix~\ref{S:Bolza}, this also shows that if $\Jac(C_4)$ is $(4,4)$-split then $C_4$ admits an optimal elliptic subcover of degree $4$. See \cite[Cor.~5.19]{kani03} for the result for general $n$.

We use the model (\ref{E:C4}) to describe a birational model of the $2$-dimensional locus of optimally $(4,4)$-split Jacobians in the moduli-space of curves of genus $2$.
The
\emph{Igusa invariants} $I_2$, $I_4$, $I_6$, and $I_{10}$ (see \cite{MR0114819}) of a genus 2 curve $C$ classify the isomorphism class of $C$ over an algebraically closed field.
  They are homogeneous polynomials of degrees 2, 4, 6, and 10 respectively in the coefficients of the defining polynomial for a model of the genus two curve.
This moduli-space is birational to affine $3$-space, as given by the
 \emph{absolute invariants} of a genus two curve \cite{MR0141643}:
\begin{equation}\label{E:AbsInv}
i_1 = 144\frac{I_4}{I_{2}^{2}}, 
	\quad i_2 = -1728\frac{\p{I_2I_4 - 3I_6}}{I_{2}^{3}},
	\quad i_3 = 486\frac{I_{10}}{I_{2}^{5}}.
\end{equation}

\begin{thm}\label{T:main}
The absolute invariants $i_1,i_2,i_3$ of a genus $2$ curve with optimally
 $(4,4)$-split Jacobian satisfy an equation $\LL$, of weighted degree $90$, where $i_1,i_2,i_3$ are given weights $2,3,5$ respectively.
\end{thm}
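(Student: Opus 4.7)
Theorem~\ref{thm:C4} provides an explicit three-parameter family $(\ref{E:C4})$ of hyperelliptic models which exhausts, up to geometric isomorphism, the optimally $(4,4)$-split locus. My plan is to push this family through the Igusa-invariant maps and then eliminate the parameters $b,c,s$ to obtain a single equation in $i_1,i_2,i_3$.

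Concretely, substituting~$(\ref{E:C4})$ into the standard polynomial formulas for $I_2,I_4,I_6,I_{10}$ produces polynomials $I_{2k}(b,c,s)$ and, via~$(\ref{E:AbsInv})$, rational functions $i_1(b,c,s)$, $i_2(b,c,s)$, $i_3(b,c,s)$. These define a rational map $\Phi: \mathbb{A}^3_{(b,c,s)} \dashrightarrow \mathbb{A}^3_{(i_1,i_2,i_3)}$ whose image closure, by Theorem~\ref{thm:C4}, is exactly the optimally $(4,4)$-split locus in moduli. Since this locus is two-dimensional, $\Phi$ has generically one-dimensional fibers, so its image is cut out by a single polynomial relation $\mathcal{L}_4(i_1,i_2,i_3)=0$. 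This relation generates the elimination ideal: clear denominators in $i_j=i_j(b,c,s)$ and eliminate $b,c,s$, either by a lex Gr\"obner basis with $\{b,c,s\}$ ordered above $\{i_1,i_2,i_3\}$ or by successive resultants. Irreducibility of $\mathcal{L}_4$ is automatic from that of the source $\mathbb{A}^3_{(b,c,s)}$.

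The weighted-degree claim reflects the natural description of the moduli space of genus two curves as a weighted projective variety with $I_2,I_4,I_6,I_{10}$ carrying weights $1,2,3,5$. Any closed subvariety is defined by weighted-homogeneous polynomials in these coordinates, and on the affine chart $I_2\neq 0$ these become weighted-homogeneous polynomials in $i_1,i_2,i_3$ with the weights $2,3,5$ indicated in the statement. The specific total weight $90$ must ultimately be read off from the elimination computation, but it can be predicted in advance by tracking the weights of the numerators and common denominator of $i_j(b,c,s)$ through the elimination step.

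The main obstacle is sheer computational bulk. The numerators of $i_1,i_2,i_3$ are already dense polynomials of substantial degree in $(b,c,s)$, and triple elimination typically introduces spurious factors which have to be isolated and stripped off. A computer algebra system is indispensable; in practice one reduces modulo several primes of moderate size to predict the monomial support and the weighted homogeneity of $\mathcal{L}_4$, and only then attempts an exact computation over $\mathbb{Q}$ and lifts by rational reconstruction.
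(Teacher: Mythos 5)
Your starting point is the same as the paper's: specialize the family (\ref{E:C4}), compute $I_2,I_4,I_6,I_{10}$ as functions of $b,c,s$, pass to $i_1,i_2,i_3$ via (\ref{E:AbsInv}), and look for the relation cut out by the image of the resulting map. But the step your plan leans on is exactly the one the authors report does not work: they state that the system is too large for Gr\"obner bases or resultants to eliminate $b,c,s$, so an exact elimination is not how $\LL$ is found. In the paper, $\LL$ is obtained by guessing the degrees, interpolating its coefficients modulo $93$ six-digit primes, and applying rational reconstruction modulo the product of those primes. That reconstruction is heuristic and by itself proves nothing; the substance of the paper's proof of Theorem~\ref{T:main} is the certification step that your proposal lacks. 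Namely, after setting $b=1$, substituting the parametric invariants $i_j(c,s)$ into $\LL$ and clearing denominators yields a polynomial $p(c,s)$ of degree at most $1800$ in $c$ and $4050$ in $s$, and the identity $p\equiv 0$ is then proved deterministically by evaluating $p$ at $1801$ values of $c$ on each of $4501$ lines $s=s_i$, which exceeds the degree bounds and forces $p$ to vanish identically; irreducibility of the Humbert surface then identifies $\LL$ with the split locus. Without either a feasible exact elimination or such a verification of the reconstructed candidate, your argument only produces a conjectural equation, so this is a genuine gap rather than a stylistic difference.

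A secondary point: your justification of the weighted-degree claim is not correct as stated. The absolute invariants are themselves invariant under scaling, and dehomogenizing a weighted-homogeneous relation in $I_2,I_4,I_6,I_{10}$ (weights $1,2,3,5$) on the chart $I_2\neq 0$ does not produce a weighted-homogeneous polynomial in $i_1,i_2,i_3$. The weights $2,3,5$ record the power of $I_2$ needed to clear the denominator of each $i_j$, so ``weighted degree $90$'' means that $I_2^{90}\,\LL(i_1,i_2,i_3)$ is weighted-homogeneous of degree $90$ in the Igusa invariants, not that every monomial of $\LL$ has weight $90$ --- indeed $\LL$ has $4574$ monomials, far more than any weighted-homogeneous form of degree $90$ in three variables of weights $2,3,5$ could have. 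Your observation that the (reduced) equation of the image is irreducible because the parameter space is irreducible is fine and is consistent with the paper's appeal to the irreducibility of the Humbert surface of discriminant $16$.
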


The equation $\LL$ is too large to reproduce on paper: it consists of 4574 monomials with coefficients having up to 138 digits.  We have therefore made a copy available electronically (see \cite{equation}).  The surface described by $\LL$ is the \emph{Humbert surface} of discriminant 16 (see \cite[Corollary~1.7]{MR1285957}).

\begin{rem}
In Appendix~\ref{S:Bolza} we use Theorem~\ref{T:44splitclassification} to verify a classic result by O. Bolza \cite{Bolza1887}.  We find that one of his equations has a sign error and that our family is birational to his corrected family. 
\end{rem}

Our main tool is the observation that an optimal $(4,4)$-splitting $\Phi_4$ factors as
\[\xymatrix{
E_1\times E_2\ar[r]^-{\Phi_2}\ar@/_2ex/[rr]_-{\Phi_4} &A\ar[r]^\Psi& J,
}\]
where $\Phi_2$ is a $(2,2)$-splitting and $\Psi$ is a polarized $(2,2)$-isogeny. A description of $(2,2)$-split principally polarized abelian varieties is already available and we classify when they admit a further polarized $(2,2)$-isogeny of the desired type.

In general, we have that both $A$ and $J$ are Jacobians. Polarized $(2,2)$-isogenies between Jacobians of genus $2$ curves are known as \emph{Richelot isogenies}.

\begin{rem}
We give a full arithmetic description of Richelot isogenies in Proposition~\ref{T:twist}. Previous literature only considered the case where the kernel is pointwise defined over the base field (see \cite{MR1406090,MR1913484,SmithThesis}).
\end{rem}

The paper is laid out in the following way. In Section~\ref{sec:nnsplit} we give relevant definitions and background material on $(n,n)$-splittings of principally polarized abelian surfaces. In Section~\ref{S:22split} we review the basic description of $(2,2)$-splittings. Section~\ref{S:22maps} collects useful results on Richelot isogenies.

In Section~\ref{S:4x4to2x2} we relate $(4,4)$-splittings to principally polarized abelian surface $A$ admitting multiple polarized $(2,2)$-isogenies.
For a polarized $(2,2)$-isogeny $\Phi: A\to B$ we write $\Phi^*:B\to A$ for the $(2,2)$-isogeny such that $\Phi^*\circ\Phi$ is multiplication by $2$.
Section~\ref{S:2Torsion} considers the case $A=\Jac(C_2)$ and relates the isogenies to Galois-theoretic properties of the Weierstrass points of $C_2$.

\begin{thm}\label{T:galchar}
Let $k$ be a field of characteristic distinct from $2$.
The Jacobian of a genus $2$ curve
$$C: Y^2=f(X)$$
has two $(2,2)$ isogenies over $k$ if and only if the Galois group of $f(X)$ is contained in $C_2\times V_4\subset S_6$ or $\tilde{S}_3=\langle (1,3,5)(2,4,6),(12)(36)(45)\rangle\subset S_6$. In the first case, $\Jac(C)$ has two isogenies $\Phi,\Psi$ such that $\Phi\circ\Psi^*$ is a $(4,2,2)$-isogeny. In the second case, it is a $(4,4)$-isogeny.
\end{thm}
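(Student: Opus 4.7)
The plan is to translate the existence of $(2,2)$-isogenies into a combinatorial condition on the Galois action on the six roots of $f$. By the discussion in Section~\ref{sec:22maps}, each $(2,2)$-isogeny out of $J$ corresponds to a maximal isotropic subgroup of $J[2]$ under the Weil pairing, and such an isogeny is defined over $k$ precisely when the subgroup is stable under the absolute Galois group $G_k=\Gal(\kbar/k)$. Writing the Weierstrass points as $P_1,\ldots,P_6$, the $15$ nonzero elements of $J[2]$ are the classes $[P_i-P_j]$, and the $15$ maximal isotropic subgroups are in bijection with the $15$ partitions of $\{1,\ldots,6\}$ into three unordered pairs (two pairs $\{i,j\}$, $\{k,l\}$ give orthogonal $2$-torsion elements iff they are disjoint). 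So the theorem should reduce to the combinatorial assertion that $G=\Gal(f/k)\subseteq S_6$ preserves two distinct pair-partitions if and only if $G$ lies, up to conjugation in $S_6$, in $C_2\times V_4$ or $\tilde{S}_3$.

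One direction will be a direct check: each of $C_2\times V_4$ and $\tilde{S}_3$ visibly preserves three pair-partitions. For the converse, my first observation is that any two distinct pair-partitions of $\{1,\ldots,6\}$ must share either exactly one common pair or none at all, since two shared pairs would force the third to coincide as well. This gives the two cases I would analyze.

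In the shared-pair case, after relabeling the two partitions may be taken to be $\{\{1,2\},\{3,4\},\{5,6\}\}$ and $\{\{1,2\},\{3,5\},\{4,6\}\}$. I would then argue that any $g\in S_6$ preserving both must fix $\{1,2\}$ setwise and must stabilize both induced pair-partitions of $\{3,4,5,6\}$. Intersecting the two order-$8$ stabilizers inside $S_{\{3,4,5,6\}}$ should yield the Klein four-group $V_4=\{e,\,(3,4)(5,6),\,(3,5)(4,6),\,(3,6)(4,5)\}$, so the joint stabilizer in $S_6$ is $\langle(1,2)\rangle\times V_4\cong C_2\times V_4$.

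In the no-shared-pair case, after relabeling the two partitions may be taken to be $\{\{1,2\},\{3,4\},\{5,6\}\}$ and $\{\{1,4\},\{2,5\},\{3,6\}\}$. Viewing the six pairs as edges of a graph on $\{1,\ldots,6\}$, coloured according to which partition they come from, produces a single colour-alternating hexagon $1{-}2{-}5{-}6{-}3{-}4{-}1$. The permutations of $\{1,\ldots,6\}$ preserving both partitions are exactly the colour-preserving automorphisms of this hexagon and form a dihedral group of order $6$; enumerating its elements (two $(3{+}3)$-cycles and three triple-transpositions) should give precisely the presentation of $\tilde{S}_3$ from the statement. The only real obstacle is ensuring that the bijection between maximal isotropic subgroups of $J[2]$ and pair-partitions is genuinely Galois-equivariant, which is handled in Section~\ref{sec:22maps}; the combinatorial cases themselves are mechanical finite verifications.
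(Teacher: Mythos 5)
Your proposal is correct and follows essentially the same route as the paper: both reduce the existence of two $(2,2)$-isogenies over $k$ to the Galois group stabilizing two distinct pair-partitions of the six roots, and then identify the joint stabilizer as $C_2\times V_4$ when the partitions share a pair and as $\tilde{S}_3$ (order $6$) when they do not. The only cosmetic difference is bookkeeping: the paper organizes the case split via the orbits (of sizes $1$, $6$, $8$) of the order-$48$ stabilizer $(C_2)^3\rtimes S_3$ of one partition, while you split directly into the shared-pair and no-shared-pair cases and compute the stabilizers by hand (intersection of dihedral groups, colour-preserving hexagon automorphisms).
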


In Section~\ref{S:S3} we apply the results from Section~\ref{S:2Torsion} to derive a model for $C_2$. As a corollary, we obtain a model for the universal elliptic curve over $X_E^{-}(4)$, the modular curve of elliptic curves with $4$-torsion anti-isometric to $E[4]$ (see Proposition~\ref{P:X4-Es}). Silverberg \cite{MR1638488} already derived such formulas, but the ones we list may be of interest since they are shorter.

In Section~\ref{S:C4derivation} we combine results from Sections~\ref{S:22maps} and \ref{S:S3} to derive the model for $C_4$ when both $A$ and $J$ are Jacobians. A finer analysis yields that $C_4'$ can be obtained from $C_4$ as an appropriate limit and that the cases where both $C_4$ and $C_4'$ fail to provide a model of a genus $2$ curve, correspond to surfaces $J$ that are not Jacobians.

\medskip
\noindent\emph{Acknowledgements.}
We would like to thank Everett Howe for pointing us to Oskar Bolza's 1887 result \cite{Bolza1887} described in Appendix~\ref{S:Bolza}. We are also very thankful for the comments of and correspondence with an anonymous referee.  Particularly his insistence on a precise definition of the notion of $(n,n)$-splitting greatly clarified the exposition and ultimately helped us arrive at a correct and complete result.


\section{Split Jacobians}\label{sec:nnsplit}

This section introduces some terminology and reviews some basic facts. We believe all results here are well known but were unable to locate a single source that stated them in the desired form, so we gather them here for the convenience of the reader.

\begin{defn} Let $A$ be an abelian surface over a field $k$. We say that $A$ is \emph{decomposable} if there exist elliptic curves $E_1,E_2$ over $k$ such that $A$ is isogenous to $E_1\times E_2$ over $k$.
\end{defn}

\begin{lem}\label{lem:fincov} Let $C$ be a curve of genus $2$ over a field $k$. If $\Jac(C)$ is decomposable then $C$ admits a finite cover $\phi_1: C\to E_1$ over $k$, where $E_1$ is an elliptic curve.
\end{lem}

\begin{proof}
We write $J=\Jac(C)$. A $k$-rational divisor class of degree $1$ gives rise to an Abel-Jacobi map $C\hookrightarrow J$ over $k$, which allows us to consider $C$ as a subvariety of $J$.
In general, we can use the $k$-rational canonical class $\kappa$ to define a morphism $C\to J$ which over an algebraic closure $\kbar$ corresponds to $\gamma\colon C(\kbar)\to \Pic^0(C/\kbar)$, defined by $P\mapsto [2P]-\kappa$. Note that for $P,Q\in C(\kbar)$ we only have $\gamma(P)=\gamma(Q)$ if $[2P]=[2Q]$, which implies that $P,Q$ are Weierstrass points on $C$. Hence, the image of $\gamma$ is birational to $C$.
Moreover, since for a Weierstrass point $P$ we do have $[2P]=\kappa$, we see that the identity $0_J\in J$ lies in the image of $\gamma$.

If $J$ is decomposable then there is an isogeny $\Phi\colon J\to E_1\times E_2$ over $k$, where $E_1,E_2$ are elliptic curves over $k$. Let $\pi_1\colon E_1\times E_2 \to E_1$ be the projection on the first factor and write $\Phi_1=\Phi\circ \pi_1$. We claim that $j\circ \Phi_1$ is not constant. If it were, then $\gamma(C)$ would have to lie in the connected component of $\ker(\Phi_1)$ that contains $0_J$. But that is a $1$-dimensional subgroup scheme of $J$, so cannot contain a singular model of a curve of genus $2$. It follows that $\phi=\gamma\circ \Phi_1\colon C \to E_1$ is a non-constant morphism between (complete, non-singular) irreducible curves and hence a finite cover.
\end{proof}

The cover in Lemma~\ref{lem:fincov} is far from unique and the one that the proof constructs is unlikely to be of minimal degree. 
This leads us to consider \emph{optimal} covers, also referred to as \emph{maximal} \cite{MR1085258} and \emph{minimal} \cite{MR1285957} covers.

\begin{defn} We call a finite cover $\phi_1\colon C\to E_1$ \emph{optimal} if for any factorisation
$$C\stackrel{\phi_1}{\to} E_1= C\stackrel{\phi_1'}{\to}D \stackrel{\psi}{\to} E_1$$
we must have $\deg(\phi_1')=\deg(\phi_1)$ or $\deg(\phi_1')=1$.
\end{defn}

It is immediate that any finite cover $\psi:C\to E$, where $C$ is of genus $2$ and $E$ is an elliptic curve, factors through some optimal cover $\phi_1:C\to E_1$.

We follow Kuhn~\cite{MR936803} and Frey-Kani~\cite{MR1085258}. We write $n=\deg(\phi_1)$. We will need our maps to be separable, so we assume that $\charact(k)\nmid n$. We have the induced maps
\[
\phi_1^*\colon E_1\to J \text{ and }\phi_{1,*}\colon J \to E_1.
\]
The optimality of $\phi_1$ implies that $\phi_1^*$ is injective and that $E_2^*:=\ker(\phi_{1,*})$ must be connected and hence an elliptic curve. We write $E_1^*=\phi_1^*(E_1)$.

Since $\phi_{1,*}\circ \phi_1^*=n\cdot\id_{E_1}$ we see that $E_1^*\cap E_2^*=E_1^*[n]$.

We write $\phi_{2,*}\colon J\to J/E_1^* =: E_2$ for the projection. We follow Kuhn's argument \cite{MR936803}. He assumes $k$ is a number field, but his method generalizes. Kuhn proves that if $C$ has a degree $1$ divisor class over $k$ that is invariant under the hyperelliptic involution then there is a cover $\phi_2\colon C\to E_2$ of degree $n$ for which $\phi_{2,*}$ is the corresponding push forward. Furthermore, he shows that if $n$ is odd then such a class exists. For even $n$ he argues that the map, initially defined over an extension of $k$ where $C$ has a Weierstrass point, actually descends to $k$. Note that the kernel of $\phi_{2,*}$ is connected and hence $\phi_2$ is optimal. We call $\phi_2\colon C\to E_2$ a \emph{complimentary cover} to the optimal cover $\phi_1\colon  C\to E_1$.

The maps $\phi_1,\phi_2$ give rise to an isogeny
\[\phi_1^*+\phi_2^*\colon E_1\times E_2\to J,\]
where $\Delta=\ker(\phi_1^*+\phi_2^*)$ is the graph of an isomorphism
\[\alpha\colon E_1[n]\to E_2[n].\]

To characterize the nature of this isogeny, we recall some standard terminology on principally polarized abelian varieties. We follow \cite{MR861974}. We recall that a polarized abelian variety is an abelian variety $A$ equipped with an isogeny $\lambda\colon A\to A^\vee$, where $A^\vee$ is the dual abelian variety of $A$, such that $\lambda$ comes from an ample invertible sheaf on $A_{\kbar}$.
If $\lambda$ is an isomorphism, we say that $(A,\lambda)$ is a \emph{principally polarized abelian variety}. A principal polarization induces, for each $n$ prime to the characteristic, an alternating non-degenerate, bilinear pairing $e_{A[n]}\colon A[n]\times A[n]\to\mu_n$, called a \emph{Weil pairing}.

The main result we need is \cite[Proposition 16.8]{MR861974}, which describes isogenies that respect polarizations. Paraphrased, it yields in our particular situation

\begin{lem} \label{L:nnpoliso}
Let $(A,\lambda_A)$ be a principally polarized abelian variety and let $\Phi\colon A\to B$ be an isogeny with $\ker(\Phi)\subset A[n]$. A necessary and sufficient condition for the existence of a polarization $\lambda_B\colon B\to B^\vee$ such that the diagram
$$\xymatrix{
A\ar[r]^{n\lambda_A}\ar[d]_\Phi&A^\vee\\
B\ar[r]^{\lambda_B}&B^\vee\ar[u]_-{\Phi^\vee}\\
}$$
commutes, is that $\ker(\Phi)$ is \emph{isotropic} with respect to $e_{A[n]}$, which means that $e_{A[n]}$ restricted to $\ker(\Phi)\times\ker(\Phi)$ is trivial.
\end{lem}

If $A$ is $g$-dimensional, then $\deg(n\lambda_A)=n^{2g}$. Since $\deg(\Phi)=\deg(\Phi^\vee)$, a simple degree calculation shows that $\lambda_B$ is principal if and only if $\deg(\Phi)=n^g$. The nondegeneracy of $e_{A[n]}$ implies that in that case $\ker(\Phi)$ is a \emph{maximal} isotropic subgroup. 
\begin{defn}
Let $(A,\lambda_A)$ and $(B,\lambda_B)$ be principally polarized abelian varieties of dimension $g$. We say that an isogeny $\Phi\colon A\to B$ is a \emph{polarized $(n_1,\ldots,n_r)$-isogeny} if $\ker(\Phi)(\kbar)\simeq\ZZ/n_1\ZZ\times\cdots\times\ZZ/n_r\ZZ$  and $\Phi^\vee\circ \lambda_B\circ \Phi=n\lambda_A$, where $n^g=\prod_{i=1}^r n_i$.
\end{defn}

Using \cite[Lemma~16.2]{MR861974} it is straightforward to check that if $\Phi\colon A\to B$ is a polarized $(n,n)$ isogeny between principally polarized abelian surfaces $(A,\lambda_A)$ and $(B,\lambda_B)$, then so is
$\Phi^\vee\colon B^\vee\to A^\vee$ between $(B^\vee,\lambda_B^{-1})$ and $(A^\vee,\lambda_A^{-1})$. Furthermore if $\lambda'$ is another polarization on $B$ such that
$\Phi^\vee\circ \lambda'\circ \Phi=n\lambda_A$ then $\lambda'=\lambda_B$. This can be seen by observing that the N\'eron-Severi group of an abelian variety is torsion-free or, more directly, if $n\lambda'=n\lambda_B$, then $\lambda'-\lambda_B$ maps the connected variety $A$ into a finite variety, so it must be constant $0$. 

\begin{lem}\label{L:optcov-nniso}
Let $C$ be a genus $2$ curve, let $\phi_1\colon C\to E_1$ be an optimal cover of degree $n$ and let $\phi_2\colon C\to E_2$ be a complimentary cover. Then
\[\phi_1^*+\phi_2^*\colon E_1\times E_2\to J\]
is a polarized $(n,n)$-isogeny, with dual isogeny
\[\phi_{1,*}\times\phi_{2,*}\colon J \to E_1\times E_2\]
\end{lem}

\begin{proof}
The duality statement is immediate. To prove that the isogeny is polarized, we just have to verify that
\[(\phi_{1,*}\times\phi_{2,*})\circ(\phi_1^*+\phi_2^*)=(n\,\id_{E_1}\times n\,\id_{E_2})\]
which follows because $\phi_{i,*}\circ\phi_j^*=0$ and $\phi_{i,*}\circ\phi_i^*=n\,\id_{E_i}$ for $(i,j)=(1,2),(2,1)$. Finally, it is an $(n,n)$-isogeny because the kernel, being the graph of an isomorphism $E_1[n]\to E_2[n]$, indeed has the structure $E_1[n](\kbar)\simeq \ZZ/n\ZZ\times\ZZ/n\ZZ$.
\end{proof}

We have
\[
\Delta=\ker(\phi_1^*+\phi_2^*)=\{ (P,\alpha(P)): P \in E_1[n]\}.
\]
For $\Delta$ to be maximally isotropic we need for all $P,Q\in E_1[n]$ that
\[1=e_{(E_1\times E_2)[n]}((P,\alpha(P),(Q,\alpha(Q))=
 e_{E_1[n]}(P,Q) e_{E_2[n]}(\alpha(P),\alpha(Q)),\]
which is precisely the case if $\alpha$ is an anti-isometry.

\begin{defn}\label{D:nnsplitting}
Let $E_1, E_2$ be elliptic curves and let $A$ be a principally polarized abelian surface. Suppose that $\Phi\colon E_1\times E_2\to A$ is a polarized isogeny. We say that $\Phi$ is an \emph{optimal polarized $(n,n)$-splitting}
if $\Delta=\ker(\Phi)$ is the graph of an anti-isometry $\alpha\colon E_1[n]\to E_2[n]$.

A principally polarized abelian surface $A$ equipped with an optimal polarized $(n,n)$-splitting is an \emph{optimally $(n,n)$-split} principally polarized abelian surface.
\end{defn}

\begin{prop}
Let $C$ be a genus $2$ curve over a field $k$ of characteristic $0$. If $\Jac(C)$ is decomposable then for some $n$ it admits an optimal $(n,n)$-splitting.
\end{prop}

\begin{proof}
Lemma~\ref{lem:fincov} guarantees that there is a finite cover $C\to E_1'$ so there is an optimal cover $\phi_1\colon C\to E_1$ as well. Let $n=\deg(\phi_1)$.  Lemma~\ref{L:optcov-nniso} shows that this gives rise to a polarized $(n,n)$-isogeny $E_1\times E_2\to\Jac(C)$ and we have established that its kernel is the graph of an anti-isometry.
\end{proof}

An $(n,n)$-splitting does not have to map to a Jacobian:

\begin{prop}\label{P:n-1isog}
An $(n-1)$-isogeny $\phi\colon E_1\to E_2$ gives rise to an optimal polarized $(n,n)$-splitting
\[
\begin{array}{cccccccc}
 \Phi\colon&E_1&\times&E_2&\to& E_1&\times& E_2\\
&(\;P\;&,&\;Q\;)&\mapsto&(\;\phi^*(Q)+P&,&\phi(P)-Q\;)
\end{array}
\]
where $\phi^*\colon E_2\to E_1$ is the isogeny such that $\phi^*\circ\phi$ is multiplication-by-$(n-1)$.
\end{prop}

\begin{proof}
Note that the restriction $\phi|_{E_1[n]}\colon E_1[n]\to E_2[n]$ yields an anti-isometry.
It is straightforward to check that $\Phi\circ\Phi$ is multiplication-by-$n$ and that $\ker(\Phi)$ consists of points $(P,\phi(P))$, with $P\in E[n]$, so the kernel of $\Phi$ is indeed that graph of an anti-isometry.
\end{proof}


\section{\texorpdfstring{$(2,2)$}{(2,2)}-Split Jacobians}\label{S:22split}

This is a brief description of $(2,2)$-splittings. We believe the results presented here are well-known, but since the construction is central to the rest of the paper and the proofs are simple, we have included them for the convenience of the reader.
See also \cite{MR1913484} and \cite[Ch.~14]{MR1406090}.

\begin{lem}\label{L:2isom-aval}
Let $k$ be a field with $\charact(k)\neq 2$ and let
\[ E_1\colon V^2=f(U)\]
be an elliptic curve over $k$, where $f(U)\in k[U]$ is a monic square-free cubic. Specifying $(E_2,\alpha)$, where $E_2$ is an elliptic curve over $k$ and $\alpha: E_1[2]\to E_2[2]$ is an anti-isometry is equivalent to specifying $a\in k\cup\{\infty\}$ with $f(a)\neq 0$ and $d\in k^\times$ representing an element in $ k^\times/k^{\times2}$ such that
\[
E_2:
\begin{cases}
W^2=-df(U)&\text{ if }a=\infty\\
W^2=d(U-a)f(U)&\text{ otherwise}
\end{cases}
\]
where $0_{E_2}\in E_2(k)$ is the unique point with $U(0_{E_2})=a$ and the anti-isometry is given by
$\alpha(0_{E_1})=0_{E_2}$ and $\alpha( (u,0) )=(u,0)$ for any $(u,0)\in E_1[2](\kbar)\setminus\{0_{E_1}\}$.
\end{lem}

\begin{proof}
First note that any group scheme isomorphism $\alpha\colon E_1[2] \to E_2[2]$ is automatically both an isometry and an anti-isometry and that any scheme isomorphism $\alpha'\colon E_1[2]\setminus\{0_{E_1}\}\to E_1[2]\setminus\{0_{E_1}\}$ can be extended uniquely to an isometry.

We first prove that if $\alpha\colon E_1[2]\to E_2[2]$ is a group scheme homomorphism, then $E_2$ and $\alpha$ can be represented as stated. Note that $U\colon E_1\to\PP^1$ represents the quotient $E_1\to E_1/\langle -1\rangle$ and that it is ramified over exactly
$U(E_1[2])=\{f(U)=0\}\cup\{\infty\}$.
Similarly, we have $U'\colon E_2\to E_2/\langle -1\rangle$ and $\alpha$ induces a scheme isomorphism
$\gamma\colon \{f(U)=0\}\to U'(E_2[2]\setminus\{0_{E_2}\}$. Since this is an isomorphism of \'etale degree $3$ subschemes of $\PP^1$, it extends uniquely to an isomorphism $\PP^1\to \PP^1$. Hence $\gamma^{-1}\circ U'\colon  E_2\to \PP^1$ is a degree $2$ cover ramified over $\{f(U)=0\}$ and some fourth point $\gamma^{-1}(U'(0_{E_2}))=a$ (hence $f(a)\neq 0$). It follows that $E_2$ admits a model as stated and that $\alpha$ is a map as advertised.

Conversely, it is clear that as long as $f(a)\neq 0$, the model for $E_2$ describes an elliptic curve and $\alpha$ describes a scheme isomorphism $E_1[2]\to E_2[2]$ sending $0_{E_1}$ to $0_{E_2}$, so it does define an anti-isometry.
\end{proof}

\begin{thm}\label{T:2x2model} Let $k$ be a field with $\charact(k)\neq 2$. Let $E_1, E_2$ be elliptic curves
given by models
\[\begin{aligned}
 E_1\colon& V^2=f(U)\\
 E_2\colon& W^2=d(U-a)f(U)
  \end{aligned}
\]
and let $\alpha\colon E_1[2]\to E_2[2]$ be the isometry induced by the identification $U(E_1[2]\setminus\{0_{E_1}\})=U(E_2[2]\setminus\{0_{E_2}\})$.
If $a\neq\infty$ then the fiber product $C_2= E_1\times_{\PP^1_U} E_2$ is a curve of genus $2$ admitting a model
\[C_2\colon Y^2=f(\frac{1}{d}X^2+a),\]
where the double covers $\phi_1\colon C_2\to E_1$ and $\phi_2\colon C_2\to E_2$ are induced by the relations
\[\begin{aligned}
U=\frac{1}{d}X^2+a,\quad V=Y,\quad W=XY.
  \end{aligned}\]
Furthermore, the isogeny
\[\phi_1^*+\phi_2^*\colon E_1\times E_2 \to \Jac(C_2)\]
is the $(2,2)$-splitting corresponding to $\alpha$.
\end{thm}
\begin{proof}
That $C_2$ is a model of the fiber product of $E_1$ and $E_2$ over the $U$-line can be verified immediately. If we establish that $\phi_1$ is an optimal cover and that $\phi_2$ is a complimentary cover then Lemma~\ref{L:optcov-nniso} establishes that $\phi_1^*+\phi_2^*$ is a $(2,2)$-splitting.  Optimality follows because $\phi_1$ and $\phi_2$ are of prime degree. It follows that $\phi_1^*: E_1\to \Jac(C_2)$ is injective.

To show that $\phi_2$ is complimentary we need that $\phi_{2,*}\circ\phi_1^*=0$. But these are maps that come from a fiber product, so we can compute the composition by taking a divisor on $E_1$, push it down to $\PP^1_U$ and pull it back to $E_2$. Since we map through a $\PP^1$, any degree $0$ divisor must map into the principal class on $E_2$, which establishes that $\phi_{2,*}\circ\phi_1^*=0$.

It is straightforward to check that $\phi_1^*+(\phi_2^*\circ\alpha):E_1[2]\to\Jac(C_2)$ is zero and hence that the kernel of $\phi_1^*+\phi_2^*$ is indeed the graph of $\alpha$.
\end{proof}

\begin{defn}
Let $E$ be an elliptic curve over a separable quadratic extension $L/k$. We write $\Re_{L/k}(E)$ for the \emph{Weil restriction of scalars} of $E$ with respect to $L/k$, in the sense of \cite[\S7.6]{MR1045822}.
\end{defn}

For the purposes here, it is sufficient to know that $A=\Re_{L/k}(E)$ is an abelian surface over $k$ that over $L$ is isomorphic to $E\times {E}^\sigma$, where $\sigma$ is a non-trivial automorphism of $L$ over $k$. The product polarization on the latter descends to a $k$-rational principal polarization on $A$.

\begin{prop}
 \label{T:2x2modeldegen} Let $k$ be a field with $\charact(k)\neq 2$. Let $E$ be an elliptic curve over $k$, let $d\in k^\times$ represent a class in $k^\times/k^{\times 2}$ and let $\alpha: E[2]\to E^{(d)}[2]$ be the obvious isometry. Let $\Delta\subset E[2]\times E^{(d)}[2]$ be the graph of $\alpha$. Then
\[
(E\times E^{(d)})/\Delta =
\begin{cases}
E\times E&\text{ if $d$ is a square}\\
\Re_{k(\sqrt{d})/k)}(E)&\text{ otherwise}
\end{cases}
\]
\end{prop}
\begin{proof}
If $d$ is square, Proposition~\ref{P:n-1isog} applies with $n=2$ and we find the $(2,2)$-isogeny given by $\Phi\colon(P,Q)\mapsto(P+Q,P-Q)$.

If $d$ is not a square, the first case at least gives us a description of $\Phi$ over $k(\sqrt{d})$. We just have to check that $\Phi$ descends to a morphism over $k$ with the twisted Galois actions on domain and codomain. Both $(E\times E^{(d)})(\kbar)$ and $\Re_{k(\sqrt{d})/k)}(E)(\kbar)$ are isomorphic to $E(\kbar)\times E(\kbar)$ as groups, but have twisted Galois actions. Let $\chi_d\colon \Gal(\kbar/k)\to \{\pm 1\}$ be the quadratic character belonging to $k(\sqrt{d})/k$. The Galois action on $E(\kbar)\times E(\kbar)$ corresponding to $E\times E^{(d)}$ is
\[
 (P,Q)^\sigma=(P^\sigma,\chi_d(\sigma)Q^\sigma)
\]
and the action corresponding to $\Re_{k(\sqrt{d})/k)}(E)$ is
\[
 (P,Q)^\sigma=\begin{cases}
               (P^\sigma,Q^\sigma)&\text{ if } \chi_d(\sigma)=1\\
               (Q^\sigma,P^\sigma)&\text{ if } \chi_d(\sigma)=-1.
              \end{cases}
\]
We want to test that the isogeny $\Phi\colon E(\kbar)\times E(\kbar)\to E(\kbar)\times E(\kbar)$ defined by $(P,Q)\mapsto (P+Q,P-Q)$ descends to $k$ when we twist domain and codomain to $E\times E^{(d)}$ and $\Re_{k(\sqrt{d})/k)}(E)$ respectively. So we must establish that
$(\Phi(P,Q))^\sigma=\Phi((P,Q)^\sigma)$ for all $\sigma\in\Gal(\kbar/k)$, with the appropriately interpreted twisted action. It is immediate that this is the case if $(\sqrt{d})^\sigma=\sqrt{d}$. In the other case we verify that
\[
\begin{split}
(\Phi(P,Q))^\sigma=(P+Q,P-Q)^\sigma=(P^\sigma-Q^\sigma,P^\sigma+Q^\sigma)=&\\
 (P^\sigma+\chi_d(\sigma)Q^\sigma,P^\sigma-\chi_d(\sigma)Q^\sigma)=
\Phi(P^\sigma,\chi_d(\sigma)Q^\sigma)=&\,\Phi((P,Q)^\sigma).
\end{split}
\]
This confirms that the isogeny is indeed defined over $k$. It also shows that the product polarization on $E\times E$ over $k(\sqrt{d})$ descends to a principal polarization on $\Re_{k(\sqrt{d})/k)}(E)$ over $k$ such that $\Phi$ is a polarized $(2,2)$-isogeny.
\end{proof}


\section{Polarized \texorpdfstring{$(2,2)$}{(2,2)}-Isogenies on Jacobians of genus 2 curves}\label{S:22maps}

The purpose of this section is to describe polarized $(2,2)$-isogenies between Jacobians of genus $2$ curves.
Such isogenies are called \emph{Richelot isogenies}. After giving an explicit description of the $2$-torsion, we review the classical description of Richelot isogenies over algebraically closed base fields, or more generally, base fields with sufficient roots.
Most of the material presented here is already known, see \cite{MR970659}, \cite[Chapter 8]{SmithThesis},  \cite[Chapter 9]{MR1406090}, or \cite[Section~4]{MR1736231}.
The new contribution is Proposition~\ref{T:twist}, where we determine the appropriate twist of the codomain for non-algebraically closed base fields.

Let $k$ be a field of odd characteristic, let $\kbar$ be an algebraic closure of $k$ and let $C$ be a curve of genus $2$ over $k$.
Then $C$ admits a model of the form
\begin{equation}\label{E:2x2model}
C\colon Y^2=f(X)=f_6 X^6+ f_5 X^5+ \cdots +f_1X+f_0,
\end{equation}
where $f(X)\in k[X]$ is a square-free polynomial of degree $5$ or $6$.  If $k$ has at least $6$ elements, then we can assume that $f_6\neq 0$. This excludes some curves over $k=\FF_3,\FF_5$ from our considerations. In fact, such curves have at least $4$ rational Weierstrass points, which forces the Galois structure of the kernel of Richelot isogenies defined over $k$  to be of the type that is already covered by the existing literature. Note that $(f_6Y)^2=f_6^2f(X)$ is also a model of $C$ over $k$, so it is not a restriction to insist that the leading coefficient is a cube. We assume that $f_6 = q_2^3$ for some $q_2\in k$.

First we describe $\Jac(C)[2]$ and its maximal isotropic subgroups.
Let $w_1,\ldots,w_6$ be the roots of $f(X)$ in $\kbar$. The Weierstrass points of $C$ are exactly $T_i=(w_i,0)$. The non-zero two-torsion points in $\Pic^0(C/\kbar)$ are exactly the divisor classes $T_{\{i,j\}}=[T_i-T_j]=[T_j-T_i]$, and the Weil-pairing is given by
$$(T_{\{i,j\}},T_{\{k,l\}})_2=(-1)^{\#\{i,j,k,l\}}.$$
Let $J=\Jac(C)$. The maximal isotropic subgroups of $J[2]$ are exactly of the form
$$\{0,T_{\{i_1,i_2\}},T_{\{i_3,i_4\}},T_{\{i_5,i_6\}}\},$$
where the indices are given by a partition $\{\{i_1,i_2\},\{i_3,i_4\},\{i_5,i_6\}\}$ of $\{1,\ldots,6\}$ into three disjoint pairs. For ease of notation, we assume that $(i_1,\ldots,i_6)=(1,\ldots,6)$.  This data corresponds to specifying a factorization
\begin{equation}\label{E:quadfactor}
F_j(X)=q_2 X^2+ q_{1,j} X+ q_{0,j}=q_2 (X-w_{2j-1})(X-w_{2j})
\end{equation}
such that
$$f(X)=F_1(X)F_2(X)F_3(X).$$
We say that $\{F_1(X),F_2(X),F_3(X)\}\subset\kbar[X]$ is a \emph{quadratic splitting} of $f$.
We say that $\{F_1(X),F_2(X),F_3(X)\}$ is a quadratic splitting \emph{over $k$} if it is stable under $\Gal(\kbar/k)$. The $F_i(X)$ do not have to be individually defined over $k$.

\begin{lem}
Let $C$ be a curve of genus $2$ over a field $k$ of odd characteristic with $\#k>5$. Suppose $\Delta\subset\Jac(C)[2]$ is a maximal isotropic subgroup scheme over $k$. Let $L$ be the coordinate ring of $\Delta\setminus\{0\}$. Then there is a quadratic polynomial $Q(X)\in L[X]$ such that $C$ admits a model of the form
\begin{equation}\label{E:normform}
C: Y^2=f(X)=\Norm_{L[X]/k[X]}(Q(X)).
\end{equation}
Conversely, for any cubic \'etale algebra $L/k$, any such representation gives rise to a maximal isotropic subgroup scheme $\Delta\subset\Jac(C)[2]$ with $\Delta\setminus\{0\}=\mathrm{Spec}(L)$.
\end{lem}

\begin{proof}
We choose a model of the form \eqref{E:2x2model} with $f_6=q_2^3$. We label the roots $w_1,\ldots,w_6$ of $f(X)$ in $\kbar$ such that
\[\Delta(\kbar)=\{0,T_{\{1,2\}},T_{\{3,4\}},T_{\{5,6\}}\}\]
Let $F_j(X)$ be defined as \eqref{E:quadfactor}. The group $\Gal(\kbar/k)$ acts by permutation on $\{T_{\{1,2\}},T_{\{3,4\}},T_{\{5,6\}}\}$ and the identification $F_j(X)\mapsto T_{\{2j-1,2j\}}$ is Galois-covariant, so
$\{F_1(X),F_2(X),F_3(X)\}$ is a quadratic splitting of $f(X)$ over $k$.
It follows that there is a polynomial $Q(X)\in L(X)$ that maps to each of the $F_j$ under the three $k$-algebra homomorphisms $L\to\kbar$. This yields that $C$ is indeed of the form \eqref{E:normform}.

For the converse, note that the three images $F_j(X)$ of $Q(X)$ under the three maps $L[X]\to \kbar[X]$ give rise to a quadratic splitting $\{F_1(X),F_2(X),F_3(X)\}$ of $f(X)$ over $k$ and hence to a maximal isotropic subscheme $\Delta\subset \Jac(C)[2]$ over $k$ with $\Delta\setminus\{0\}=\mathrm{Spec}(L)$.
\end{proof}

Next we describe the codomain of a Richelot-isogeny.
Suppose that $\Delta\subset\Jac(C)[2]$ is a maximal isotropic subgroup scheme over $k$ and let $\{F_1(X),F_2(X),F_3(X)\}$ be the corresponding quadratic splitting.
We will describe the principally polarized abelian surface $B=\Jac(C)/\Delta$ when it is a Jacobian itself.
We define the \emph{determinant} of the quadratic splitting to be
\begin{equation}\label{E:delta}
\delta=\det
\begin{pmatrix}
 q_{0,1}& q_{1,1} & q_{2}\\
 q_{0,2}& q_{1,2} & q_{2}\\
 q_{0,3}& q_{1,3} & q_{2}\\
\end{pmatrix}
\end{equation}
(see \cite[~page 117]{SmithThesis} or \cite[~page 89]{MR1406090}). 
If $\delta = 0$ then we say the quadratic splitting $\{F_1(X), F_2(X), F_3(X)\}$ is \emph{singular} In this case $B$ is a product of elliptic curves over $\kbar$.  Otherwise, $B$ is the Jacobian of a genus 2 curve over $\kbar$ and we say $\{F_1(X), F_2(X), F_3(X)\}$ is \emph{nonsingular}.

For a non-singular quadratic splitting, the following classical construction gives a curve $\tilde{C}_1$ such that $B=\Jac(\tilde{C}_1)$ over $\kbar$.
Suppose $\{F_1(X), F_2(X), F_3(X)\}$ is nonsingular.  Then for $(i,j,k)=(1,2,3),(2,3,1),(3,1,2)$ we define
$$G_i(X)=\delta^{-1}\det
\begin{pmatrix}
  \frac{d}{dX}F_j(X)& \frac{d}{dX}F_k(X)\\
  F_j(X)            & F_k(X)
\end{pmatrix}
$$
It is straightforward to check that $\{G_1(X),G_2(X),G_3(X)\}\subset\kbar[X]$ is again stable under $\Gal(\kbar/k)$.
For $d\in k^*$, we consider the curve
\begin{equation}\label{eq:Cdtilde}
\tilde{C}_d\colon d\tilde{Y}^2 = g(\tilde{X}) = G_1(\tilde{X})G_2(\tilde{X})G_3(\tilde{X}).
\end{equation}

\begin{lem} If $\delta\neq 0$ then
the polynomial $g$ is squarefree of degree 5 or 6.
\end{lem}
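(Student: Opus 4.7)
I would proceed by direct algebraic computation, leveraging the nonsingularity hypothesis $\delta \neq 0$ together with the squarefreeness of $f$. The argument splits into a degree bound followed by a squarefreeness check.

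Write $F_i(X) = q_2 X^2 + q_{1,i} X + q_{0,i}$ and $F_i'(X) = 2 q_2 X + q_{1,i}$. Expanding the Wronskian-type determinant defining $G_i$ shows that the cubic terms in $F_j' F_k$ and $F_k' F_j$ both equal $2 q_2^2 X^3$ and cancel, so $\deg G_i \leq 2$ and hence $\deg g \leq 6$. A short computation gives the leading coefficient of $G_i$ as $\delta^{-1} q_2(q_{1,k}-q_{1,j})$, and summing over the three cyclic choices $(i,j,k)$ shows these coefficients sum to zero. If all three vanished simultaneously we would have $q_{1,1}=q_{1,2}=q_{1,3}$; then columns $2$ and $3$ of the $\delta$-matrix are both multiples of $(1,1,1)^T$, forcing $\delta = 0$, contrary to hypothesis. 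Hence at most one $G_i$ has vanishing leading coefficient. In that remaining case, if the next coefficient $2\delta^{-1} q_2(q_{0,k}-q_{0,j})$ also vanished we would have $F_j=F_k$, making $F_j^2 \mid f$ and contradicting squarefreeness of $f$. Thus $\deg G_i \geq 1$ whenever its leading term vanishes, and $\deg g \in \{5,6\}$.

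For squarefreeness I would rule out both a double root of a single $G_i$ and a shared root between two $G_i$. Since $F_j''=F_k''=2q_2$, the quadratic second-derivative terms cancel in $G_i'$, yielding the clean formula $G_i'(X)=2q_2\delta^{-1}(F_k(X)-F_j(X))$. A double root $\alpha$ of $G_i$ therefore forces $F_j(\alpha)=F_k(\alpha)$, and combined with $G_i(\alpha)=0$ this gives either $F_j(\alpha)=F_k(\alpha)=0$ (a shared root of two distinct factors of $f$, so $f$ has a repeated root) or $F_j'(\alpha)=F_k'(\alpha)$ (making the degree-$\leq 1$ polynomial $F_j-F_k$ vanish to order $2$ at $\alpha$, so $F_j=F_k$); either outcome contradicts squarefreeness of $f$. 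For a common root $\alpha$ of $G_i$ and $G_j$ with $i\ne j$, the two vanishing Wronskians force the three vectors $v_\ell := (F_\ell(\alpha),F_\ell'(\alpha)) \in \bar k^2$ to be pairwise parallel; none of them can vanish (that would put a double root of $F_\ell$ at $\alpha$), so I may write $v_\ell = a_\ell(s,t)$ with $a_\ell \neq 0$ and $(s,t)\neq(0,0)$. Back-solving yields $q_{1,\ell}=a_\ell t - 2q_2\alpha$ and $q_{0,\ell}=a_\ell(s-\alpha t)+q_2\alpha^2$, and performing the column operations $\mathrm{Col}_1 \mapsto \mathrm{Col}_1 - \alpha^2 \mathrm{Col}_3$ and $\mathrm{Col}_2 \mapsto \mathrm{Col}_2 + 2\alpha\, \mathrm{Col}_3$ turns the first two columns of the $\delta$-matrix into $(s-\alpha t)(a_1,a_2,a_3)^T$ and $t(a_1,a_2,a_3)^T$, which are linearly dependent. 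Hence $\delta=0$, a contradiction.

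\textbf{Main obstacle.} The degree bound is essentially bookkeeping, and ruling out a double root of a single $G_i$ is short once the clean formula for $G_i'$ is noticed. The real subtlety is the common-root case: one must first extract the projective statement that the three tangent-value vectors become collinear at $\alpha$, and then re-encode this as rank deficiency of the $\delta$-matrix via the right column operations. Having $q_2 \neq 0$ ensures $F_\ell$ is genuinely quadratic with the same leading coefficient, which is what makes both the $G_i'$ formula and the column-operation trick work.
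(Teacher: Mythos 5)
Your argument is correct, and it is genuinely different in character from what the paper does: the paper disposes of this lemma by citing a direct computation in Smith's thesis (which in effect verifies the statement via explicit formulas relating $g$, its discriminant, $\disc(f)$ and the determinant $\delta$ of the splitting), whereas you give a self-contained structural proof. Your degree bookkeeping (cancellation of the cubic terms, the leading coefficients $\delta^{-1}q_2(q_{1,k}-q_{1,j})$ summing to zero, and the observation that two of them vanishing forces all $q_{1,\ell}$ equal and hence $\delta=0$) is sound, as is the double-root analysis via the clean identity $G_i'=2q_2\delta^{-1}(F_k-F_j)$, valid in odd characteristic; and your common-root case correctly converts the two vanishing Wronskians into pairwise proportionality of the nonzero vectors $(F_\ell(\alpha),F_\ell'(\alpha))$ and then, via the column operations $\mathrm{Col}_1\mapsto\mathrm{Col}_1-\alpha^2\,\mathrm{Col}_3$, $\mathrm{Col}_2\mapsto\mathrm{Col}_2+2\alpha\,\mathrm{Col}_3$, into rank deficiency of the $\delta$-matrix, contradicting nonsingularity. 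What your route buys is transparency about exactly which hypotheses are used ($\delta\neq 0$, $f$ squarefree, the common leading coefficient $q_2\neq 0$, and $\charact(k)\neq 2$), and it works uniformly over $\kbar$ without writing anything explicitly; what the cited computation buys instead is explicit identities (degrees and discriminants of $g$ in terms of $f$ and $\delta$) that are reusable for quantitative statements elsewhere, which a root-by-root contradiction argument does not provide.
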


\begin{proof}
This follows by direct computation; see \cite[Page 122]{SmithThesis}.  
\end{proof}

We are now ready to review the Richelot isogeny.
From \cite[~Theorem 8.4.11]{SmithThesis} or \cite[~Section 3.1]{MR970659}
we know that over $\kbar$ we have $B=\Jac(\tilde{C}_1)$ and that the isogeny is described by a \emph{Richelot correspondence} defined by a curve $\Gamma_d\subset C\times \tilde{C_d}$ over $\kbar$ given by
$$\Gamma_d\colon\left\{\begin{array}{rcl}
F_1(X)G_1(\tilde{X})+F_2(X)G_2(\tilde{X})&=&0\\
F_1(X)G_1(\tilde{X})(X-\tilde{X})&=& \sqrt{d}\,\tilde{Y}Y\\
F_2(X)G_2(\tilde{X})(X-\tilde{X})&=&-\sqrt{d}\,\tilde{Y}Y\\
\end{array}\right.$$
The curve $\Gamma_d$ covers both $C$ and $\tilde{C}_d$. The Richelot isogeny can be computed by taking divisor classes on ${C}$, pulling back to $\Gamma_d$ and then pushing down to $\tilde{C}_d$.

There are two cases where it is easy to see for which twist $d$ we have $\Jac(\tilde{C}_d)=B$.

First, if $F_1,F_2,F_3\in k[X]$ and $d=1$, then $\Gamma_d$ is defined over $k$ and hence  $B=\Jac(\tilde{C}_1)$ over $k$.

Second, if $F_1$ and $F_2$ are quadratic conjugate, say over an extension $k(\sqrt{d})$, then $F_3$ is necessarily defined over $k$. Then the set of defining equations for $\Gamma_d$ is $\Gal(\kbar/k)$-stable, and hence $\Gamma_d$ is defined over $k$. Since over $\kbar$, the curves $\tilde{C}_d$ and $\Gamma_d$ are isomorphic to $\tilde{C}_1$ and $\Gamma_1$, it follows from the above discussion that $\Gamma_d$ describes a correspondence giving rise to an isogeny $\Jac(C)\to\Jac(\tilde{C}_d)$ of the desired type.
Note that $d=\disc(L)$.

\begin{prop}\label{T:twist}
Let $C$ be a genus $2$ curve as in \eqref{E:normform}.
Let $\Delta\subset\Jac(C)[2]$ be the maximal isotropic subgroup scheme over $k$
with $\delta\neq 0$ and $\Delta\setminus\{0\}=\mathrm{Spec}(L)$. Let $d=\disc(L)$. Then $\Jac(C)/\Delta=\Jac(\tilde{C}_d)$.
\end{prop}
\begin{proof}
The cases where $\Gal(\kbar/k)$ acts non-transitively on $\Delta(\kbar)\setminus\{0\}$ have been dealt with above. For the general case we consider a generic model. We will prove it there and all special cases follow by specialization.

We consider the field $K=k(h_0,h_1,h_2,q_{i,j})$ with $i,j\in\{0,1,2\}$ and let $L=K[T]/(T^3+h_2T^2+h_1T+h_0$ and let $Q(X)\in L[X]$ be defined by
$$Q=\sum_{i,j=0}^2 q_{i,j} T^j\,X^i.$$
We now consider the curve $C\colon Y^2=f(X)=\Norm_{L[X]/k[X]}(Q(X))$ over $K$. 

We have that $L/K$ is a cubic extension with Galois closure $L(\sqrt{d})$ over $K$. Using the discussion above, we know that $B=\Jac(\tilde{C}_d)$ over $L$.
However, we know that $B$ itself is defined over $K$ as a principally polarized variety, so $B$ must be some twist of $\Jac(\tilde{C}_d)$ that trivializes over the cubic extension $L$. However we have $\Aut_{\overline{K}}(\Jac(\tilde{C}_d))=\Aut_{\overline{K}}(\tilde{C}_d)=\{\pm 1\}$, so both only have quadratic twists. It follows the twist must be trivial.

Specialization now yields that for any curve $C$ of the stated form, a polarization preserving isomorphism $\Jac(C)/\Delta\simeq \Jac(\tilde{C}_d)$ over $k$ exists.
\end{proof}


\section{\texorpdfstring{$(4,4)$}{(4,4)}-split principally polarized abelian surfaces}\label{S:4x4to2x2}

Let $J$ be a principally polarized abelian surface with an optimal $(4,4)$-splitting $\Phi_4\colon E_1\times E_2\to J$ such that the kernel $\Delta_4\subset E_1[4]\times E_2[4]$ is the graph of an anti-isometry $\alpha_4\colon E_1[4]\to E_2[4]$.
Since $E_i[2]\subset E_i[4]$, we also have $\alpha_2=\alpha_4|_{E_1[2]}\colon E_1[2]\to E_2[2]$.
The subgroup $\Delta_2=\Delta_4\cap (E_1\times E_2)[2]$ is the graph of $\alpha_2$, so we see that $\Phi_4$ factors through an optimal $(2,2)$-splitting $E_1\times E_2\to A=(E_1\times E_2)/\Delta_2$.
We use the principal polarizations to identify $E_1\times E_2, A, J$ with their duals. We obtain
the diagram
\begin{equation}\label{E:44split}
\xymatrix@C4em{
   E_1\times E_2 \ar[r]^-{2\lambda_{E_1\times E_2}} \ar[d]^{\Phi_2} \ar@/_3ex/[dd]_{\Phi_4}&
   (E_1\times E_2)^\vee \ar[r]^{2} &
   (E_1\times E_2)^\vee
\\
   A\ar[r]^{\lambda_A} \ar[d]^{\Psi}&
   A^\vee\ar@{-->}[r]^{2} \ar[u]^-{\Phi_2^\vee}&
   A^\vee\ar[u]^{\Phi_2^\vee}
\\
   J\ar[rr]^{\lambda_{J}} &&
   J^\vee \ar[u]^{\Psi^\vee}\ar@/_3ex/[uu]_{\Phi_4^\vee}
}\end{equation}
in which we want to establish that also with the addition of the dashed arrow, the diagram is commutative. To lighten our notation, we avoid explicitly referring to the polarizations as much as possible. To this end we introduce the shorthand notation
\[\begin{split}
\Psi^*=&\lambda_A^{-1}\circ \Psi^\vee\circ\lambda_{J}, \\
\Phi_2^*=&\lambda_{E_1\times E_2}^{-1}\circ \Phi_2^\vee\circ\lambda_A,\\
\Phi_4^*=&\lambda_{E_1\times E_2}^{-1}\circ \Phi_4^\vee\circ\lambda_{J}.\\
  \end{split}
\]

\begin{lem}\label{T:kernels}
The isogeny $\Psi\colon A\to J$ is a polarized $(2,2)$-isogeny. Furthermore, $\ker(\Psi)\cap\ker(\Phi_2^*)=\{0\}$.
\end{lem}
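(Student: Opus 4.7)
The plan is to exploit the factorization $\Psi_4 = \Phi \circ \Psi_2$, the defining relation $\Delta_2 = \Delta_4 \cap (E_1\times E_2)[2]$, and the dual identity $\Psi_2^* \circ \Psi_2 = [2]_{E_1\times E_2}$ that expresses $\Psi_2$ being a $(2,2)$-isogeny. Multiplicativity of degrees in $\Psi_4 = \Phi\circ\Psi_2$ immediately gives $\deg(\Phi) = \deg(\Psi_4)/\deg(\Psi_2) = 16/4 = 4$. For $x \in \ker(\Phi)$, the surjectivity of $\Psi_2$ supplies $y \in E_1\times E_2$ with $\Psi_2(y) = x$, and $\Psi_4(y) = \Phi(x) = 0$ then forces $y \in \Delta_4$. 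Because $\Delta_4 \cong E_1[4]$, one has $2y \in 2\Delta_4 = \Delta_2 = \ker(\Psi_2)$, so $2x = \Psi_2(2y) = 0$; hence $\ker(\Phi) = \Psi_2(\Delta_4)$ is an order-$4$ subgroup of $A[2]$.

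\textbf{Maximal isotropy.} To upgrade this to the statement that $\Phi$ is a $(2,2)$-isogeny, I still need $\ker(\Phi)$ to be maximal isotropic in $A[2]$ for the Weil pairing coming from the induced principal polarization $\phi_A$ on $A$. Since $\Psi_4$ is a $(4,4)$-isogeny, $\Psi_4^*\phi_{J_4} = 4\,\phi_{E_1\times E_2}$, and since $\Psi_2$ is a $(2,2)$-isogeny, $\Psi_2^*\phi_A = 2\,\phi_{E_1\times E_2}$. Combining these via $\Psi_4 = \Phi\circ\Psi_2$ and using injectivity of $\Psi_2^*$ on the torsion-free Néron--Severi group $\mathrm{NS}(A)$, one obtains $\Phi^*\phi_{J_4} = 2\,\phi_A$, which together with $\deg(\Phi) = 4$ and $\ker(\Phi)\subseteq A[2]$ is the defining condition for a $(2,2)$-isogeny.

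\textbf{Trivial intersection of kernels.} The identity $\Psi_2^*\circ\Psi_2 = [2]_{E_1\times E_2}$ shows that $\ker(\Psi_2^*) = \Psi_2\!\left((E_1\times E_2)[2]\right)$. If $x \in \ker(\Phi)\cap\ker(\Psi_2^*)$, write $x = \Psi_2(y) = \Psi_2(z)$ with $y \in \Delta_4$ and $z\in(E_1\times E_2)[2]$. Then $y-z \in \ker(\Psi_2) = \Delta_2 \subseteq (E_1\times E_2)[2]$, so $y \in \Delta_4\cap(E_1\times E_2)[2] = \Delta_2$, giving $x = \Psi_2(y) = 0$.

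\textbf{Main obstacle.} The degree count, the inclusion $\ker(\Phi)\subseteq A[2]$, and the intersection computation are essentially formal diagram chases. The delicate step is the maximal isotropy of $\ker(\Phi)$: a degree-$4$ subgroup of $A[2]$ is not automatically Lagrangian, so one must invoke the polarization identity $\Phi^*\phi_{J_4} = 2\phi_A$ and hence the torsion-freeness of $\mathrm{NS}(A)$ to cancel $\Psi_2^*$. An alternative, more hands-on route is to identify $A[2]$ explicitly as a subquotient of $(E_1\times E_2)[4]$ and track the Weil pairing through the quotient, but this appears more laborious than the polarization argument.
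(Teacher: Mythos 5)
Your argument is correct, and its overall skeleton matches the paper's: both identify $\ker(\Phi)$ with $\Psi_2(\Delta_4)\cong\Delta_4/\Delta_2\cong(\Z/2\Z)^2$ inside $A[2]$, and both settle the second claim by a chase through $\Psi_2$. Where you genuinely diverge is in the isotropy step and in the details of the chase. For isotropy, the paper simply observes that the Weil pairing on $H=\Psi_2(\Delta_4)$ is induced by the (trivial) restriction of the $4$-Weil pairing to $\Delta_4$; you instead descend polarizations, deriving $\Phi^*\phi_{J_4}=2\phi_A$ from $\Psi_4^*\phi_{J_4}=4\phi_{E_1\times E_2}$ and the identity $2\phi_{E_1\times E_2}$ for the pullback of $\phi_A$ along $\Psi_2$, cancelling on the torsion-free $\mathrm{NS}(A)$, and then invoking the standard correspondence (the paper's citation of Milne, Proposition~16.8) between such polarization identities and maximal isotropic kernels. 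Your route is more formal and makes explicit what the paper leaves as a one-line assertion about induced pairings; the paper's is shorter but asks the reader to supply the compatibility of Weil pairings under $\Psi_2$. For the intersection, the paper identifies $\ker(\Psi_2^*)=\Psi_2(E_1[2]\times\{0\})$ and uses injectivity of $\Psi_4$ on $E_1[4]\times\{0\}$ (exploiting that $\Delta_4$ is a graph), while you identify $\ker(\Psi_2^*)=\Psi_2((E_1\times E_2)[2])$ --- the same subgroup --- and use only the defining relation $\Delta_2=\Delta_4\cap(E_1\times E_2)[2]$; both are valid, and yours has the mild advantage of not using the graph structure of $\Delta_4$. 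One cosmetic caution: you use the symbol $\Psi_2^*$ both for the complementary isogeny $A\to E_1\times E_2$ (the paper's meaning) and for pullback of polarizations along $\Psi_2$; the mathematics is fine, but in this paper's notation the latter should be written as $\hat{\Psi}_2\circ\phi_A\circ\Psi_2$ (or given a different name) to avoid the clash.
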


\begin{proof}
It follows from \cite[Lemma~16.2c]{MR861974} that for $p,q\in (E_1\times E_2)[4]$, we have that $e_{(E_1\times E_2)[4]}(p,q)=e_{A[2]}(\Phi_2(p),\Phi_2(q))$. Hence we see that $\ker(\Psi)=\Phi_2(\Delta_4)\subset A[2]$ is maximal isotropic, so by Lemma~\ref{L:nnpoliso} there is a principal polarization $\lambda'\colon J\to J^\vee$ such that $2\lambda_A=\Psi^\vee\circ \lambda'\circ\Psi$. 
It follows that
\[\Phi_2^\vee\circ\Psi^\vee\circ \lambda'\circ\Psi\circ\Psi_2=4\lambda_{E_1\times E_2}=\Psi_4^\vee\circ\lambda_{J}\circ\Psi_4,\]
so the image of $(\lambda'-\lambda_{J})\circ\Psi_4$ is contained in $\ker(\Psi_4^\vee)$, which is finite. On the other hand, $\Psi_4$ is surjective and $J$ is connected, so $\lambda'-\lambda_{J}$ is constant and hence $\lambda'=\lambda_{J}$. This establishes that $\Psi$ is indeed a polarized $(2,2)$-isogeny.

In order to see that $\ker(\Psi)\cap\ker(\Phi_2^*)=\{0\}$, note that $\Phi_2$ is injective on $E_1[2]\times\{0\}$ and maps it onto $\ker(\Phi_2^*)$, because $\Phi_2^*\circ\Phi_2=2$.
Since $\Psi\circ\Phi_2$ is injective on $E_1[4]\times\{0\}$, it follows that $\Psi$ is also injective on $\Phi_2(E_1[2]\times\{0\})=\ker(\Phi_2^*)$. This shows that $\ker(\Psi)\cap\ker(\Psi_2^*)=\{0\}$.
\end{proof}

In fact, whether $\Psi\circ\Phi$ is a $(4,4)$-isogeny is completely determined by $\ker(\Psi)\cap\ker(\Phi^*)$.

\begin{lem}\label{L:44iso-kernels}
Let $A,B,J$ be polarized abelian surfaces and suppose that $\Phi^*\colon A\to B$ and $\Psi\colon A\to J$ are polarized $(2,2)$-isogenies. Then $\Psi\circ\Phi\colon B\to J$ is a polarized $(4,4)$-isogeny if and only if $\ker(\Psi)\circ\ker(\Phi^*)=\{0\}$. It is a $(4,2,2)$-isogeny if and only if $\ker(\Psi)\cap\ker(\Phi^*)\simeq \ZZ/2\ZZ$ and it is a $(2,2,2,2)$-isogeny if and only if $\ker(\Psi)=\ker(\Phi^*)$.
\end{lem}

\begin{proof}
It is immediate that $\Psi\circ\Phi$ is a polarized isogeny. The nature of the isogeny can be read off from the kernel, so we investigate what these isogenies do on the $4$-torsion. The isogenies we consider fit in the following commutative diagram.
\[\xymatrix{
B[4]\ar[rr]^2\ar[dr]_{\Phi}&&B[4]\ar[rr]^2\ar[dr]_-{\Phi}&&B[4]\\
&A[4]\ar[rr]^2\ar[dr]_{\Psi}\ar[ur]^{\Phi^*}&&A[4]\ar[ur]_{\Phi^*}\\
&&J[4]\ar[ur]_{\Psi^*}
}\]
Each of $B[4](\kbar),A[4](\kbar),J[4](\kbar)$ is isomorphic to $(\ZZ/4\ZZ)^4$ as a $\ZZ$-module. We normalize choice of basis such that the Weil pairing on each is given by
\[e(\underline{v},\underline{w})=\underline{v}^T
\begin{pmatrix}
0&0&0&1\\0&0&1&0\\0&-1&0&0\\-1&0&0&0
\end{pmatrix}\underline{w}\]
The following are matrices that correspond to polarized $(2,2)$-isogenies,
\[
M=\begin{pmatrix}
   2&0&0&0\\0&2&0&0\\0&0&1&0\\0&0&0&1
  \end{pmatrix},\;
M^*=\begin{pmatrix}
   1&0&0&0\\0&1&0&0\\0&0&2&0\\0&0&0&2
  \end{pmatrix},\;
N=\begin{pmatrix}
   2&0&0&0\\0&1&0&0\\0&0&2&0\\0&0&0&1
  \end{pmatrix},\;
N^*=\begin{pmatrix}
   1&0&0&0\\0&2&0&0\\0&0&1&0\\0&0&0&2
  \end{pmatrix},
\]
where $MM^*=NN^*=2\id$. It is straightforward to check that $\ker(NM)\simeq (\ZZ/4\ZZ)\times(\ZZ/2\ZZ)^2$ and that $\ker(MM)=(\ZZ/4\ZZ)^2$. Correspondingly, we find $\ker(N)\cap\ker(M^*)\simeq (\ZZ/2\ZZ)$ and that $\ker(M)\cap\ker(M^*)=0$, so only the $(4,4)$-isogeny gives rise to trivially intersecting kernels.

It remains to check that these isogenies represent all possibilities. To that end, we observe that a $(2,2)$ isogeny is determined up to isomorphism by its kernel, and that there are $15$ maximal isotropic subgroups in $(\ZZ/2\ZZ)^4$.
It is straightforward to check that if we choose two such subgroups $K_1,K_2$ then there is a transformation $T\in\Sp_4(\ZZ/2\ZZ)$ such that $(TK_1,TK_2)$ is one of
\[\big(\ker(M^*)[2],\ker(M^*)[2]\big),\big(\ker(M^*)[2],\ker(M)[2]\big),\big(\ker(M^*)[2],\ker(N)[2]\big).\]
This shows that by choice of basis, one can ensure that the isogenies considered are indeed represented by the matrices given.
\end{proof}

Lemma~\ref{L:2isom-aval} and Theorem~\ref{T:2x2model} imply that if $j(E_1)\neq j(E_2)$ then $A=\Jac(C_2)$ for some genus $2$ curve $C_2$. Similarly, we expect $J$ to be a Jacobian outside some special conditions. Remark~\ref{R:4x4split-3iso} and Proposition~\ref{P:44degen}
describe such special conditions.
In fact, Theorem~\ref{T:44splitclassification} establishes that these describe all cases where $J$ is not a Jacobian.

\begin{rem}\label{R:4x4split-3iso}
A $3$-isogeny $\phi\colon E_1\to E_2$ induces an anti-isometry $\alpha_4\colon E_1[4]\to E_2[4]$.
By Proposition~\ref{P:n-1isog}, we have $J=E_1\times E_2$ in this case.

If $j(E_1)\neq 0$ then $j(E_2)\neq j(E_1)$, so  $A=\Jac(C_2)$.
The 3-isogeny $-\phi$ gives rise to the same $A,J$ but a different $(4,4)$-splitting, so we find that $C_2$ is a genus $2$ curve that is a double cover of $E_1$ and of $E_2$ in $3$ different ways, see also \cite{MR1913484}. If $j(E_1)=j(E_2)=0$ we find that $A$ is not a Jacobian.
\end{rem}

\begin{prop}\label{P:44degen}
Let $E$ be an elliptic curve with  discriminant $D$. Suppose that $E$ has a rational point $T_1\in E[2](k)$ of order two.

If $D$ is a square then $E[2](k)=\{0,T_1,T_2,T_3\}$ and $E$ has three $2$-isogenies $\phi_i\colon E\to E/\langle T_i\rangle$.
The morphism
\begin{equation}\label{E:degen44splitting}
\begin{array}{cccccccc}
 \Phi\colon& E&\times& E &\to& E/\langle T_2\rangle &\times&E/\langle T_3\rangle\\
&(\;P\;&,&\;Q\;)&\mapsto& (\;\phi_2(P+Q)\;&,&\;\phi_3(P-Q)\;)
\end{array}
\end{equation}
is an optimal $(4,4)$-splitting.

If $D$ is not a square then $E[2](k(\sqrt{D}))=\{0,T_1,T_2,T_3\}$ and \eqref{E:degen44splitting} descends to a $(4,4)$-splitting over $k$ denoted by
\[\Phi'\colon E\times E^{(D)} \to \Re_{k(\sqrt{D})/k}(E/\langle T_2\rangle).\]
\end{prop}

\begin{proof}
If $E$ has square discriminant then we know that the extension generated by $E[2](\kbar)$ is either $k$ or a cyclic cubic extension. The assumption that $T_1\in E[2](k)$ implies it is the former.

In this case, it is clear that $\Phi$ is an isogeny of degree $16$, defined over $k$. To check that $\Phi$ is an optimal $(4,4)$-splitting, we determine $\ker(\Phi)(\kbar)$. Suppose that $(P,Q)\in \ker(\Phi(\kbar)$. Then $Q=P$ if $2P=0$ or $2P=T_2$ and $Q=-P$ if $2P=T_3$ or $2P=T_2+T_3$.

We fix generators $E[4](\kbar)=\langle P_2,P_3\rangle$ with $2P_2=T_2$ and $2P_3=T_3$. Then $Q=\alpha(P)$ where $\alpha\colon E[4](\kbar)\to E[4](\kbar)$ is defined by $P_2\mapsto P_2$ and $P_3\mapsto -P_3$. This is indeed an anti-isometry.

If $D$ is a non-square then we can still define $\Phi$ over $k(\sqrt{D})$. The domain and codomain of $\Phi'$ are isomorphic over $k(\sqrt{D})$ to those of $\Phi$. Checking that $\Phi$ descends to $\Phi'$ over $k$ is a straightforward exercise in checking Galois actions.
\end{proof}


\section{2-level structure on curves of genus 2}\label{S:2Torsion}

The main result in this section is the proof of Theorem~\ref{T:galchar}. We also discuss how the result can be interpreted in terms of moduli spaces of genus $2$ curves.

\begin{proof}[Proof of Theorem~\ref{T:galchar}]
Let $C: Y^2=f(X)$ be a curve of genus $2$ over a field $k$ of odd characteristic and let $J=\Jac(C)$.
Recall from Section~\ref{S:22maps} that $J[2](\kbar)$ can be represented by differences of Weierstrass points of $C$. It follows that the action of $\Gal(\kbar/k)$ on $J[2](\kbar)$, which is through $\Sp_4(\ZZ/2\ZZ)$, factors through the action on the $6$ Weierstrass points, which is through $S_6$.
This yields a homomorphism $S_6\to\Sp_4(\ZZ/2\ZZ)$ and it is straightforward to check that it is an isomorphism.

We have also seen that maximal isotropic subgroups of $J[2]$ correspond to quadratic splittings of $f(X)$. It is straightforward to check that $S_6$ acts transitively on the quadratic splittings of $f(X)$. If $J[2]$ has a polarized $(2,2)$-isogeny over $k$, then $f(X)$ must have a Galois-stable quadratic splitting. We have
$$\mathrm{Stab}_{S_6}(\{\{1,2\}, \{3,4\},\{5,6\}\})\simeq (C_2)^3\rtimes S_3$$
Furthermore, the remaining $14$ quadratic splittings have two orbits under $(C_2)^3\rtimes S_3$, one of length $6$ and one of length $8$.
If $J[2]$ is to have two $k$-rational polarized $(2,2)$-isogenies then $\Gal(\kbar/k)$ should act through the stabilizer subgroup of a representative of one of those orbits.
If we pick a stabilizer subgroup of the first orbit, we obtain
\[ C_2\times C_4 =\langle (12),(34)(56),(35),46)\rangle\]
stabilizing $3$ quadratic splittings
\begin{equation}\label{E:C2xV4}
  	\big\{\{1,2\},\{3,4\},\{5,6\}\big\},\,
  	\big\{\{1,2\},\{3,5\},\{4,6\}\big\},\,
  	\big\{\{1,2\},\{3,6\},\{4,5\}\big\}
\end{equation}
and for the second orbit we obtain
\[\tilde{S}_3=\langle (135)(246),(12)(36)(45)\rangle
\]
stabilizing
\begin{equation}\label{E:S3}
	\big\{\{1,2\},\{3,4\},\{5,6\}\big\},\,
  	\big\{\{1,4\},\{2,5\},\{3,6\}\big\},\,
  	\big\{\{1,6\},\{2,3\},\{4,5\}\big\}.
\end{equation}
Combining this with Lemma~\ref{L:44iso-kernels} yields that \eqref{E:C2xV4} corresponds to isogenies that combine to $(4,2,2)$-isogenies and that \eqref{E:S3} corresponds to isogenies that combine to $(4,4)$-isogenies, completing the proof of Theorem~\ref{T:galchar}.
\end{proof}

\begin{lem}\label{T:C4andC2} Let $k$ be a field with $\charact(k)\neq 2$.
Let $\Phi_4\colon E_1\times E_2 \to J$ be an optimal $(4,4)$-splitting over $k$ that factors through the $(2,2)$-splitting $\Phi_2\colon E_1\times E_2\to A$. Suppose that $A=\Jac(C_2)$ where $C_2$ is a curve of genus $2$. Then $C_2$ admits a model of the form
$$C_2\colon Y^2=g(X)=f(X^2)=c_3X^6+c_2X^4+c_1X^2+c_0,$$
such that $g(X)$ and $f(X)$ have the same splitting field and $\Gal(g)$ is isomorphic to $\tilde{S_3}$ as permutation group.
\end{lem}
\begin{proof}
By Theorem~\ref{T:2x2model}, the curve $C_2$ admits a model of the given form,
where $V^2 = f(U)$ is a model of $E_1$. It remains to prove that $g(X)$ and $f(X)$ have the same splitting field.

Let $L$ denote the splitting field of $g$ and let $K$ denote the splitting field of $f$.  Then $K$ is an extension of $k$, and either $L$ is a degree two extension of $K$ or $L = K$.
By Theorem~\ref{T:galchar} we know that $\Gal(L/k) \leq \tilde{S_3}$.

The three kernels of the $(2,2)$-isogenies that are fixed by $\tilde{S_3}$ are given by the partitionings in \eqref{E:S3}.  A simple verification shows that $\tilde{S_3}$ acts faithfully on each of these kernels.  In particular, if $\set{0, T_1, T_2, T_3}$ is the kernel of the polarized $(2,2)$-isogeny $\Jac(C_2) \rightarrow E_1 \times E_2$, then $\tilde{S_3}$ has the canonical $S_3$-action on $\set{T_1, T_2, T_3}$.  Thus, $\tilde{S_3}$ has the usual $S_3$ action on the roots of $f$. It follows $f$ and $g$ have the same splitting field.
\end{proof}

While the proof of and the condition given in Theorem~\ref{T:galchar} are Galois-theoretic, specifying multiple $(2,2)$-isogenies on $\Jac(C)$ amounts to specifying partial level structure, so one expects that the structure of the result is reflected in covers of moduli spaces as well. We will sketch how one can obtain such a formulation.

Let $k$ be a field of characteristic different from $2$. Any curve of genus $2$ can be obtained by specializing $(f_0,\ldots,f_6)$ in the curve
$$C_{\underline{f}}\colon Y^2=f(X)=f_6 X^6+ f_5 X^5+\cdots+f_0$$
over $\kf=k(f_6,f_5,\ldots,f_0)$.
Similarly, any curve of genus $2$ with all of its Weierstrass points labeled can be obtained by specializing $(w_1,\ldots,w_6,f_6)$ in the curve
$$C_{\underline{w}}\colon Y^2=f_6(X-w_1)\cdots(X-w_6)$$
over $\kw=k(f_6,w_1,\ldots,w_6)$. Of course, one can just forget a labelling to obtain a curve $C_{\underline{f}}$ from $C_{\underline{w}}$. 
This allows us to express $\kw$  as a finite extension of
$\kf$ via
\begin{align*}
f_5&=-f_6(w_1+\cdots+w_6)\\
f_4&=f_6(w_1w_2+w_1w_3+\cdots+w_5w_6)\\
&\vdots\\
f_0&=f_6w_1\cdots w_6
\end{align*}
In fact, $\kw$ is a splitting-field of $f(X)$ over $\kf$ and $\Gal(\kw/\kf)=S_6$. As we observed in the proof of Theorem~\ref{T:galchar}, $\kw$ is also the splitting field of $\Jac(C_{\underline{f}})[2]$ over $\kf$.
The fractional linear transformations on the $X$-line below $C$ induce a $\mathrm{PGL}_2(k)$-action on $\kf$ and $\kw$.
If we divide out by this action, we obtain a relation with the function fields of the coarse moduli spaces $\MM$ of curves of genus $2$ and $\MM(2)$ of curves of genus $2$ with full level $2$-structure on their Jacobians, which is an $\Sp_4(\FF_2)$-cover of $\MM$
$$\xymatrix@C3em@R1em{
\kw \ar[dr]^{./\mathrm{PGL}_2(k)}\ar[dd]_{./S_6}\\
&k(\MM(2))\ar[dd]^{./\mathrm{Sp}_4(\FF_2)}\\
\kf \ar[dr]_{./\mathrm{PGL}_2(k)}\\
&k(\MM)\\
}$$
The subgroups identified in Theorem~\ref{T:galchar} give rise to intermediate fields $K_1,K_2,K_3$ as depicted in Figure~\ref{F:galgroups} and, by dividing out by $\mathrm{PGL}_2(\FF_2)$, also moduli spaces between $\MM$ and $\MM(2)$. One of the interesting phenomena here, that does not occur for elliptic curves, is that there are two non-conjugate ways of specifying two maximal isotropic subgroups of $\Jac(C)[2]$ and hence that there are multiple partial level $2$ structures that can be imposed on $\Jac(C)[2]$.

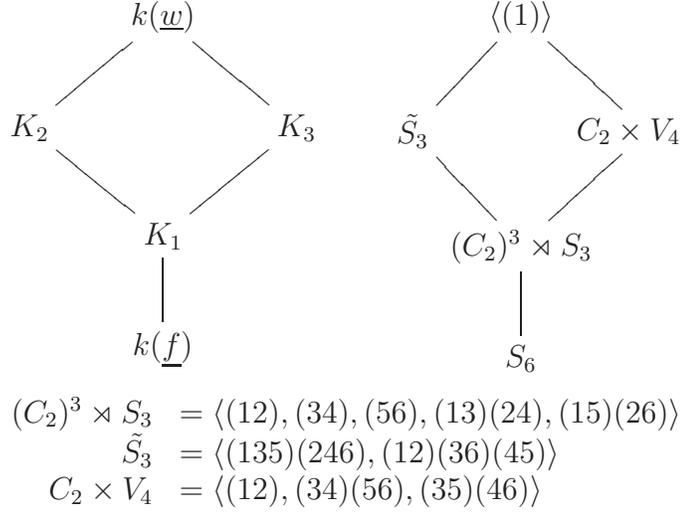
\begin{figure}
$$
\xymatrix{
&\kw\ar@{-}[dl]\ar@{-}[dr]\\
K_2\ar@{-}[dr]&&K_3\ar@{-}[dl]\\
&K_1\ar@{-}[d]\\
&\kf
}\hspace{2em}
\xymatrix@C0.1em{
&\langle(1)\rangle\ar@{-}[dl]\ar@{-}[dr]\\
\tilde{S_3}\ar@{-}[dr]&&\hspace{-1.2em}C_2\ar@{-}[dl]\times V_4\\
&(C_2)^3\rtimes S_3\ar@{-}[d]\\
&S_6
}$$
$$
\begin{array}[t]{rl}
 (C_2)^3\rtimes S_3&=\langle (12),(34),(56),(13)(24),(15)(26)\rangle\\
 \tilde{S_3}&=\langle (135)(246),(12)(36)(45)\rangle\\
C_2\times V_4&=\langle (12),(34)(56),(35)(46)\rangle
\end{array}
$$
\caption{Galois groups associated to intermediate $2$-level structure}\label{F:galgroups}
\end{figure}


\section{Bielliptic genus 2 curves with \texorpdfstring{$S_3$}{S3} as a Galois group}\label{S:S3}

In Section~\ref{S:4x4to2x2} we saw that a $(4,4)$-splitting $E_1\times E_2\to J$ gives rise to a $(2,2)$-splitting $E_1\times E_2\to A$, where $A$ is a principally polarized abelian surface admitting two rational polarized $(2,2)$-isogenies with trivially intersecting kernels.

In this section, we give something close to a universal model for the genus 2 curve $C_2$ from Lemma~\ref{T:C4andC2}. Since the corresponding moduli space of genus $2$ curves is not a fine moduli space (the space $\MM(2)$ is not even fine), a universal curve does not exist. However, by allowing extra parameters, we can still give a family that covers all possible $C_2$ by specialization, similar to how any elliptic curve can be obtained by specializing a general Weierstrass model $Y^2+a_1XY+a_3Y=X^3+a_2X^2+a_4X+a_6$.

Let $k$ be a field of characteristic distinct from 2 or 3.  Let $C_2$ be a genus 2 curve over $k$ with a $(2,2)$-split Jacobian and let $E_1$ be a degree 2 subcover of $C_2$.  Then $E_1$ has a model $V^2 = f(U) = U^3 + bU + c$ and $\Gal(f)$, the Galois group of $f$, is a subgroup of $S_3$.  
In order to produce the family, we concentrate on the most general case $\Gal(f)=S_3$. We will argue later that other cases are also parametrized.

By Theorem~\ref{T:2x2model}, the curve $C_2$ admits a model
 $Y^2 = g(X)$  where
\[
g(X) = f\left(\frac{X^2}{d} + a\right) \text{ with }a,d\in k.
\]

Working in the extension $k[U]/(f(U)) = k[r]$, the polynomials $f$ and $g$ factor as
\begin{equation}\label{E:sextic}
\begin{aligned}
f(U)  &= (U - r) \left(U^2 + rU + \left(r^2 + b\right)\right)\\
g(X)  &= \frac{1}{d^3}\left(X^2 + ad - rd\right)h(x),
\end{aligned}
\end{equation}	
where
\begin{equation}\label{E:h-quartic}
h(X) = X^4 + (dr + 2ad)X^2 + d^2\left(r^2 + ar + a^2 + b\right).
\end{equation} 

By Lemma~\ref{T:C4andC2}, we know that $g$ and $f$ have the same splitting field.
This means that $h$ must be reducible over $k(r)$.  Otherwise $h$ would be irreducible and we would require a degree 4 extension over $k(r)$ to split $h$.  The following lemma gives a testable condition.

\begin{lem}[Kappe and Warren \cite{MR992075}]\label{T:irreducibility}
Let $h(x) = x^4 + bx^2 + d$ be a polynomial over a field $k$ of characteristic $\neq 2$ and let $\pm \alpha$, $\pm \beta$ be its roots.  Then the following conditions are equivalent:
\begin{enumerate}
	\item $h(x)$ is irreducible over $k$;
	\item \label{L:temp} The following are not squares in $k$:
	\begin{enumerate}[(i)]
		\item $b^2 - 4d$, 
		\item $-b + 2\sqrt{d}$, and
		\item $-b -2\sqrt{d}$.
	\end{enumerate}
\end{enumerate}
\end{lem}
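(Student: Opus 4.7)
The plan is to classify factorizations of $h(x)=x^4+bx^2+d$ by exploiting its biquadratic shape. If $\alpha^2,\beta^2$ denote the two roots of the resolvent $u^2+bu+d$, then the four roots of $h$ in $\kbar$ are $\pm\alpha,\pm\beta$. Any factorization of $h$ over $k$ into proper factors can be refined to a factorization into two monic quadratics in $k[x]$: a linear factor $(x-\alpha)\in k[x]$ automatically yields $(x+\alpha)\in k[x]$ by the $x\mapsto -x$ symmetry of $h$, whence $(x^2-\alpha^2)\mid h$ with quotient in $k[x]$. Irreducibility of $h$ over $k$ is therefore equivalent to none of the three quadratic-quadratic factorizations of $h$ over $\kbar$ being defined over $k$. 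Up to ordering the factors, these correspond to the three partitions of $\{\alpha,-\alpha,\beta,-\beta\}$ into two unordered pairs.

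I then work through the three cases. The pairing $\{\alpha,-\alpha\}\sqcup\{\beta,-\beta\}$ yields $(x^2-\alpha^2)(x^2-\beta^2)$, which is defined over $k$ iff $\alpha^2,\beta^2\in k$, i.e.\ iff the resolvent $u^2+bu+d$ splits over $k$, i.e.\ iff $b^2-4d$ is a square in $k$. The pairing $\{\alpha,\beta\}\sqcup\{-\alpha,-\beta\}$ yields factors $x^2\mp(\alpha+\beta)x+\alpha\beta$, which lie in $k[x]$ iff both $\alpha\beta$ and $\alpha+\beta$ are in $k$; using $(\alpha\beta)^2=d$ and $(\alpha+\beta)^2=\alpha^2+2\alpha\beta+\beta^2=-b+2\alpha\beta$, this happens precisely when $d$ is a square in $k$ (fixing a sign with $\sqrt{d}=\alpha\beta$) and $-b+2\sqrt{d}$ is itself a square in $k$. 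The pairing $\{\alpha,-\beta\}\sqcup\{-\alpha,\beta\}$ yields factors $x^2\mp(\alpha-\beta)x-\alpha\beta$, and an identical argument with $(\alpha-\beta)^2=-b-2\sqrt{d}$ shows it is defined over $k$ iff $d$ is a square and $-b-2\sqrt{d}$ is a square in $k$.

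Combining the three cases gives the stated equivalence: $h$ is irreducible over $k$ iff none of $b^2-4d$, $-b+2\sqrt{d}$, $-b-2\sqrt{d}$ is a square in $k$. The conditions on $-b\pm 2\sqrt{d}$ are to be interpreted vacuously when $d$ is itself not a square in $k$, since then $\alpha\beta\notin k$ already blocks the second and third pairings; it suffices then to check condition (i). The computation is elementary throughout, and the main (modest) obstacle is verifying the exhaustiveness of the three pairings and handling the sign-choice for $\sqrt{d}$ cleanly, which uses $\mathrm{char}\,k\neq 2$ to keep $\alpha+\beta$ and $\alpha-\beta$ distinct whenever $\alpha$ and $\beta$ are.
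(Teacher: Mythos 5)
Your proof is correct, and there is nothing in the paper to compare it against: the paper quotes this lemma from Kappe and Warren's article \cite{MR992075} and uses it as a black box, giving no proof of its own. Your argument is a sound, self-contained substitute, and it follows the natural (essentially the original) route: reduce reducibility over $k$ to the existence of a monic quadratic factor in $k[x]$, enumerate the three pairings of the roots $\pm\alpha,\pm\beta$, and translate each pairing into a squareness condition via $\alpha^2+\beta^2=-b$, $(\alpha\beta)^2=d$, $(\alpha\pm\beta)^2=-b\pm2\alpha\beta$; your treatment of the sign ambiguity in $\sqrt{d}$ and of the vacuity of conditions (ii)--(iii) when $d$ is not a square in $k$ (a square of an element of $k$ lies in $k$, so $-b\pm2\sqrt{d}$ can only be a square in $k$ if $\sqrt{d}\in k$) is exactly right. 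Two small points are glossed over. First, the identification of quadratic factorizations with partitions of $\{\alpha,-\alpha,\beta,-\beta\}$ tacitly assumes the four roots are distinct; in the degenerate cases ($d=0$ or $b^2=4d$) the quantity $b^2-4d$ is already a square and $h$ is visibly reducible, so the equivalence survives, but one sentence saying so would close the gap. Second, your closing remark that $\charact(k)\neq 2$ is needed to keep $\alpha+\beta$ and $\alpha-\beta$ distinct is not where the hypothesis actually enters; it is used in the discriminant criterion for the resolvent quadratic in case (i), and in the fact that $\alpha\neq-\alpha$ for $\alpha\neq 0$ when refining a linear factor of $h$ to the quadratic factor $x^2-\alpha^2$.
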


We can use Lemma \ref{T:irreducibility} to determine the conditions on $a$ and $d$ such that $h$ factors as a product of two quadratics over $k(r)$.  In our case, the polynomial $h$ will be reducible over $k(r)$ if one of the following is true:
\begin{enumerate}[\hspace{1.5em}(i)]
	\item \label{L:Case1}$(dr + 2ad)^2 - 4d^2\left(r^2 + ar + a^2 + b\right)$ is a square in $k(r)$, or
	\item \label{L:Case2}$-(dr + 2ad) + 2d\sqrt{r^2 + ar + a^2 + b}$ is a square in $k(r)$, or
	\item \label{L:Case3}$-(dr + 2ad) - 2d\sqrt{r^2 + ar + a^2 + b}$ is a square in $k(r)$.
\end{enumerate}

Taking the conditions one at a time, in case \eqref{L:Case1}, after simplification, we require $-3r^2 - 4b$ to be a square.  Observe that this is the discriminant of $x^2 + rx + (r^2 + b)$ and hence occurs exactly when our original polynomial $f(x)$ splits over $k(r)$. This contradicts $\Gal(f) = S_3$, so we ignore this possibility for now.

In the remaining two cases, we require $r^2 + ar + a^2 + b$ to be a square in $k(r)$.  Let $t \in k(r)$ such that $r^2 + ar + a^2 + b = t^2$.  Since $k(r)$ is a cubic extension of $k$, we can set $t = t_2r^2 + t_1r + t_0$. It follows that
\begin{align*}
	r^2 + ar + a^2 + b &= \left(t_2r^2 + t_1r + t_0\right)^2\\
	&= t_2^2r^4 + 2t_1t_2r^3 + \left(t_{1}^{2} + 2t_0t_2\right)r^2 + 2t_0t_1r + t_{0}^{2}\\
	&=\left(t_{1}^{2} + 2t_0t_2 - bt_{2}^{2}\right)r^2 + \left(2t_0t_1 - 2bt_1t_2 - ct_{2}^{2}\right)r + \left(t_{0}^{2} - 2ct_1t_2\right).
\end{align*}

Equating coefficients, we obtain the system of three equations:
\begin{align}
	t_{1}^{2} + 2t_0t_2 - bt_{2}^{2} - 1 &= 0\notag\\
	-a + 2t_0t_1 - 2bt_1t_2 - ct_{2}^{2} &=0\\
	a^2 + b -t_{0}^{2} + 2ct_1t_2 &=0\notag
\end{align}

We obtain an affine variety in $\mathbb{A}^{4}$ with parameters $b$ and $c$. 
This variety has two components, interchanged by
 $(a,t_0,t_1,t_2) \mapsto (a, -t_0, -t_1, -t_2)$, which can be found either using a primary decomposition of a polynomial ideal (e.g., \texttt{PrimaryComponents} in Magma \cite{magma}), or by eliminating variables (say $a$ and $t_0$) via resultants and multivariate GCD, and a multivariate polynomial factorization. Each component is a genus 0 curve in $\mathbb{A}^{4}$.  Using for instance Magma, we can parametrize this curve. Writing $s$ for the parameter, we obtain

\begin{align}
	a &= \frac{s^{4} - 2bs^{2} - 8cs + b^{2}}{4\left(s^{3} + bs + c\right)}\label{E:Param-a}\\
	t_{0} &= \frac{-s^4 - 6bs^{2}  - 4cs - b^{2}}{4\left(s^{3} + bs + c\right)}\notag    \\
	t_{1} &= \frac{-s^{3} + bs + 2c}{2\left(s^{3} + bs + c\right)}\notag    \\
	t_{2} &= \frac{-3s^{2} - b}{2\left(s^{3} + bs + c\right)}.\notag    
\end{align}

For any $s \in k$, this parametrization gives a value for $a$ such that \mbox{$r^2 + ar + a^2 + b$} 
is a square in $k(r)$.  Using the parametrization, we can express the square root of \mbox{$r^2 + ar + a^2 + b$} as
$$
\frac{-3s^{2} - b}{2\left(s^{3} + bs + c\right)}r^2 + 
	\frac{-s^{3} + bs + 2c}{2\left(s^{3} + bs + c\right)}r + 
	\frac{-s^4 - 6bs^{2}  - 4cs - b^{2}}{4\left(s^{3} + bs + c\right)}.
$$
This allows us to evaluate the expressions in \eqref{L:Case2} and \eqref{L:Case3}.
In case \eqref{L:Case2} we find that $-(dr + 2ad) + 2d\sqrt{r^2 + ar + a^2 + b}$ becomes
	$$\left(-\frac{1}{4\left(s^3 + bs + c\right)}\right) \cdot d \cdot F_1,$$
where $F_1 = \left(6s^3 + 2bs\right) r^2 - \left(6s^3 + 2bs\right) r - \left(3s^4 + 2bs^2 - 12cs + 3b^2\right)$. This is a square in $k(r)$ if and only if
\begin{equation}\label{E:dbad}
d = -\left(s^{3} + bs + c\right) \cdot \square
\end{equation}
where $\square$ represents a square in $k$.
Using \eqref{E:Param-a} and \eqref{E:dbad}, we find that $g(X) = f(X^2/d + a)$ has the same splitting field as $f$.  The Galois group of $g$ is indeed isomorphic to $S_3$ but its representation in $S_6$ is $S''_3 =\langle (123)(456),(23)(56)\rangle$ which is not conjugate to $\tilde{S_3}$ from Section \ref{S:2Torsion}. Therefore, $C\colon Y^2=g(X)$ is not of the form predicted by Lemma~\ref{T:C4andC2}.

In case \eqref{L:Case3}, we find that $-(dr + 2ad) - 2d\sqrt{r^2 + ar + a^2 + b}$ becomes
	$$
	\left(-\frac{1}{4\left(s^3 + bs + c\right)}\right) \cdot d \cdot F_2
	$$	where $F_2 = \left(6s^2 + 2b\right)r^2 - \left(2s^3 + 6bs + 8c\right) r - \left(s^4 + 10bs^2 - 20cs + b^2\right)$.  This is a square in $k(r)$ if and only if
\begin{equation}\label{E:Param-d}
\begin{aligned}
	d &= \p{4b^3 + 27c^2}\p{s^3 + bs + c} \cdot \square\\
	&= -D\cdot f(s) \cdot \square
\end{aligned}
\end{equation}
where $\square$ represents a square in $k$ and $D$ is the discriminant of $f$.

Using this parametrization, our hyperelliptic curve $C_2$ is given by $Y^2 = g(X)$ where:
\begin{equation}\label{E:C2}
\begin{aligned}
	g =&\frac{1}{{s^3 + bs + c}^3}\Bigg(\frac{1}{\p{4b^3 + 27c^2}^3}X^6 + \frac{3\p{s^4 - 2bs^2 - 8cs + b^2}}{4\p{4b^3 + 27c^2}^2}X^4\\
	&+ \frac{P(b,c,s)}{16\p{4b^3 + 27c^2}}X^2+ \frac{\p{s^6 + 5bs^4 + 20cs^3 - 5b^2s^2 - 4bcs - b^3 - 8c^2}^2}{64}\Bigg)
\end{aligned}
\end{equation}
and where $P$ is given by
$$
P = 3s^8 + 4bs^6 - 48cs^5 + 50b^2s^4 + 128bcs^3 + 4b^3s^2 + 192c^2s^2 - 16b^2cs + 3b^4 + 16bc^2.
$$
As desired, we find that $g$ has the same splitting field as $f$ and that $\Gal(g)\simeq \tilde{S_3}$ as found in Section \ref{S:2Torsion}.  
The factorization for $g$ over its splitting field is given in appendix \ref{S:factoredg}.  

Let $\phi_1\colon C_2\to E_1$ be the cover arising from $(X,Y)\mapsto(U,V)=(X^2/d+a,Y)$. Let $\Psi\colon\Jac(C_2)\to B$ be one of the other polarized $(2,2)$-isogenies we have by construction on $\Jac(C_2)$. 
Let 
\[E_s\colon W^2=-\disc(f)\cdot f(s)\cdot(U-a)\cdot f(U)\]
be the complementary curve and $\phi_2\colon C_2\to E_s$ the corresponding cover.
It is straightforward to check that $\Psi\circ \phi^*\colon E_1\to B$ is injective and hence that $\Phi_4=\Psi\circ(\phi_1^*+\phi_2^*)\colon E_1\times E_s\to B$ is an optimal $(4,4)$-splitting of $B$. This means that the data we have specified ($s$ and $\Psi$) should also determine an anti-isometry $\alpha_s\colon E_1[4]\to E_s[4]$. The ambiguity of choice in $\Psi$ corresponds to the fact that if $\alpha_s\colon E_1[4]\to E_s[4]$ is an anti-isometry, then so is $-\alpha_s$.

Let $X_{E_1}^{-}(4)$ be the completion of the moduli space of elliptic curves with prescribed $4$-torsion structure anti-isometric to $E_1[4]$ modulo multiplication by $(\ZZ/4\ZZ)^\times$. This is a cover of the $j$-line $X(1)$, Galois over $\kbar$, with
\[\Aut_{\kbar}(X_{E_1}^{-}(4)/X(1))=\PSL_2(\ZZ/4\ZZ),\]
 so $X_{E_1}^{-}(4)\to X(1)$ is a degree $24$ cover.

On the open part of the $s$-line where the equation for $E_s$ defines an elliptic curve, the map $s\mapsto E_s$ provides a map from the $s$-line to $X_{E_1}^{-}(4)$. 
This map cannot be constant, since $a(s)$ is not constant in $s$, so we can interpret the $s$-line as a cover of $X_{E_1}^{-}(4)$. It turns out to be an isomorphism, and $E_s$ provides a model of the universal elliptic curve over $X_{E_1}^{-}(4)$. This provides an alternative construction to the one given by Silverberg \cite{MR1638488}. Our formulas are shorter.

\begin{prop}\label{P:X4-Es} Let $b,c\in k$ such that $4b^3+27c^2\neq 0$ and let
\[E\colon V^2=f(U)=U^3+bU+c\]
be an elliptic curve. Let $s$ be a parameter on $\PP^1$ and consider
\[E_s\colon W^2=-\disc(f)\left(4(s^3+bs+c)U-(s^4 - 2bs^2 - 8cs + b^2)\right) f(U),\]
with $(U,W)=(\frac{s^4 - 2bs^2 - 8cs + b^2}{4(s^3+bs+c)},0)$ taken to be the identity element.
Then $E_s$ is isomorphic to
\begin{align*}
\tilde{E_s}\colon
y^2&=x^3+a_4x+a_6\text{ with}\\
a_4&=(4b^3 + 27c^2)^2(s^8b + 12s^7c - 28/3s^6b^2 - 28s^5bc - 14/3s^4b^3 -84s^4c^2 \\
   &+ 28/3s^3b^2c - 28/3s^2b^4 - 56s^2bc^2 - 44/3sb^3c - 96sc^3 + b^5 + 20/3b^2c^2)\\
a_6&=-(4b^3 + 27c^2)^3(s^{12}c - 8/3s^{11}b^2 - 22s^{10}bc + 88/27s^9b^3 - 88s^9c^2 +
55s^8b^2c\\
   & - 176/9s^7b^4 - 308/9s^6b^3c - 176/9s^5b^5 - 176s^5b^2c^2 - 649/9s^4b^4c - 528s^4bc^3\\
   & + 88/27s^3b^6 - 704/9s^3b^3c^2 - 704s^3c^4 + 154/9s^2b^5c + 352/3s^2b^2c^3 -
8/3sb^7\\
  & - 248/9sb^4c^2 - 64sbc^4 - 5/3b^6c - 560/27b^3c^3 - 64c^5)
\end{align*}
with
\[j(\tilde{E}_s)=\frac{256}{4b^3+27c^2}
 \frac{\left(
\begin{aligned}
3bs^8 + 36cs^7 - 28b^2s^6 - 84bcs^5 - 14(b^3 + 18c^2)s^4
+ 28b^2cs^3 \\- 28b(b^3 + 6c^2)s^2 - 4c(11b^3 + 72c^2)s +
3b^5 + 20b^2c^2
\end{aligned}
\right)^3
}{(s^6 + 5bs^4 + 20cs^3 - 5b^2s^2 - 4bcs - b^3 - 8c^2)^4}.
\]
The map $s\mapsto E_s$ induces an isomorphism $\PP^1\to X_{E}^{-}(4)$ and $\tilde{E}_s$ provides a model of the universal curve over $X_{E}^{-}(4)$.  
For $s=\infty$ we find that $\tilde{E}_\infty$ is isomorphic to the quadratic twist $E^{(D)}$ of $E$ by $D=\disc(E)$.
\end{prop}
\begin{proof}
The computation of the model $\tilde{E}_s$ and its $j$-invariant are straightforward. It establishes that $s\mapsto j(E_s)$ induces a degree $24$ cover $\PP^1\to X(1)$. We have already established that $s\mapsto E_s$ induces a cover $\PP^1\to X_{E_1}^{-}(4)$. The map induced by $s\mapsto j(E_s)$ factors through $j\colon X_{E_1}^{-}(4)\to X(1)$, which also has degree $24$, so the first map must be of degree $1$ and hence an isomorphism.

The only point where the curve defined by $\tilde{E}_s$ might not be immediately clear is for $s=\infty$. However, we can consider the isomorphic model $y^2=x^3+a_4/s^8x+a_6/s^{12}$. Then we find that
\[
\left.\frac{a_4}{s^8}\right|_{s=\infty}=(4b^3+27c^2)^2b\text{ and } \left.\frac{a_6}{s^{12}}\right|_{s=\infty}=-(4b^3+27c^2)^3c
\]
which confirms that $E_\infty=E^{(D)}$ with $D=-16(4b^3+27c^2)=\disc(E)$.
\end{proof}

Note that the description of $E_s$ is even shorter than that of $\tilde{E}_s$, but $E_s$ has the drawback of not being a Weierstrass-form and not specializing to an elliptic curve for $s^3+bs+c=0$. It does show very nicely where the denominator of $j(\tilde{E}_s)$ comes from. This denominator vanishes exactly when $f(\frac{s^4 - 2bs^2 - 8cs + b^2}{4(s^3+bs+c)})=0$.

\begin{cor}\label{C:sline-44split-modspace} Let $E_1\colon V^2=U^3+bU+c$ be an elliptic curve
The affine variety $\PP^1_s\setminus\{s^6 + 5bs^4 + 20cs^3 - 5b^2s^2 - 4bcs - b^3 - 8c^2\}$ parametrizes principally polarized abelian surfaces $J_s$ together with a pair of optimal $(4,4)$-splittings $\pm\Phi_4\colon E_1\times E_s \to J_s$.
\end{cor}

\begin{cor}\label{cor:4torsiontwist}
Let $E$ be an elliptic curve over a field $k$ with $\charact(k)\neq 2$. Let $D$ be the discriminant of $E$. Then there is an anti-isometry $\alpha_4\colon E[4]\to E^{(D)}[4]$.
\end{cor}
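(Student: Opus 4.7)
The plan is to extract $\lambda_4$ directly from the explicit $(4,4)$-isogenous abelian surface constructed in Remark~\ref{rem:squaredisc}. By the discussion in Section~\ref{sec:nnsplit}, a $(4,4)$-isogeny $E_1 \times E_2 \to J$ is precisely the datum of an anti-isometry $E_1[4] \to E_2[4]$ with respect to the Weil pairing, encoded as the graph $\Delta_4$ forming the kernel. Hence it suffices to exhibit, for every $E$ over $k$ with discriminant $D$, some abelian surface over $k$ that is $(4,4)$-isogenous to $E \times E^{(D)}$.

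In the core case---$\charact(k) \neq 3$ with $E$ admitting a Weierstrass form $V^2 = U^3 + bU + c$, $b \neq 0$, and $D = -4b^3 - 27c^2 \notin (k^\times)^2$---I would form the curve $C$ of equation~\eqref{ref:Csinfty}. Remark~\ref{rem:squaredisc} furnishes a $(2,2)$-isogeny $\Jac(C) \to \Re_{k(\sqrt{D})/k}(E)$, and Lemma~\ref{lem:WR} supplies a further $(2,2)$-isogeny $\Re_{k(\sqrt{D})/k}(E) \to E \times E^{(D)}$; their composition is a $(4,4)$-isogeny $\Jac(C) \to E \times E^{(D)}$ over $k$. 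Passing to the dual isogeny $E \times E^{(D)} \to \Jac(C)$ (itself a $(4,4)$-isogeny), its kernel is a maximal isotropic subgroup of $(E \times E^{(D)})[4]$, hence of the form $\{(P, \lambda_4(P)) : P \in E[4]\}$ for some anti-isometric isomorphism $\lambda_4$, as required.

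The residual configurations---$\charact(k) = 3$, $j(E) = 0$, and $D \in (k^\times)^2$---each require separate attention. The first two can be dispatched by Galois descent from a finite extension $k'/k$ over which $E$ admits the requisite short Weierstrass form with $b \neq 0$: the construction of $\lambda_4$ is natural in the elliptic curve, so the resulting map descends to $k$. For $D$ a square, the Weil restriction degenerates to $E \times E$ and Remark~\ref{rem:squaredisc} does not directly produce a $(4,4)$-isogeny. I would instead argue by specialization within the parameter space $\mathrm{Spec}\, k[b,c][\Delta^{-1}]$, where $\Delta = -4b^3 - 27c^2$: the anti-isometry produced generically over the non-square locus defines a section of a finite \'etale torsor over the base, and this section extends across the square-discriminant divisor by the standard extension property of \'etale torsors.

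The principal obstacle will be making this specialization precise: one must verify that the generic $\lambda_4$ extends regularly across $\{D \in (k^\times)^2\}$ rather than degenerating there. I expect this to be tractable because the torsor of anti-isometries is simply a coset of the (\'etale) torsor of isometries $\Sp_2(\mathbb{Z}/4)$, so the extension is automatic once one checks that the generic construction is defined in a Zariski-open neighbourhood of the relevant specialization.
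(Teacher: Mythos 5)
Your non-square-$D$ construction is the paper's own route (the limit curve \eqref{ref:Csinfty}, Remark~\ref{rem:squaredisc}, Lemma~\ref{lem:WR}), but the step that extracts $\lambda_4$ has a genuine gap: a maximal isotropic subgroup of $(E\times E^{(D)})[4]$ is \emph{not} automatically the graph of an isomorphism $E[4]\to E^{(D)}[4]$. For instance $E[2]\times E^{(D)}[2]$ has order $16$ and is isotropic for $e_4$ (since $e_4(2R,2S)=e_4(R,S)^4=1$), and so is $\langle(P,0),(0,Q)\rangle$ with $P,Q$ of order $4$; neither is a graph. To conclude you must check that the kernel meets $E[4]\times\{0\}$ and $\{0\}\times E^{(D)}[4]$ trivially, equivalently that neither degree-$4$ cover factors through an isogeny of the elliptic factor. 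The same omission infects the previous sentence: a composite of two $(2,2)$-isogenies need not be a $(4,4)$-isogeny at all (a Richelot isogeny followed by its dual is $[2]$, with kernel $(\ZZ/2\ZZ)^4$); one needs precisely the trivial-intersection condition $\ker(\Phi)\cap\ker(\Psi_2^*)=\{0\}$ of Lemma~\ref{T:kernels}, which for \eqref{ref:Csinfty} the paper can verify because it knows the second Richelot kernel explicitly (the splittings \eqref{L:R1}, \eqref{L:R2} fixed by $\tilde{S}_3$). Your proposal nowhere identifies the second kernel, so the ``hence of the form $\{(P,\lambda_4(P))\}$'' assertion is unsupported.

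The treatment of the residual cases also fails as written. No finite extension of $k$ can produce a model $V^2=U^3+bU+c$ with $b\neq 0$ when $j(E)=0$ (then $b=0$ over every extension), nor in characteristic $3$ (where that shape forces $j=0$); characteristic and $j$-invariant are unchanged by base extension, so there is nothing to descend — and even granting a construction over $k'$, descending $\lambda_4$ would require Galois-equivariance, which is not automatic since anti-isometries are far from unique and the construction depends on choices (over $k'$ the square class of $D$ may even change). The paper's escape from exactly these cases is the representation-theoretic remark immediately after the corollary: the class $\{M,-M\}$ of $M=\left(\begin{smallmatrix}1&2\\2&-1\end{smallmatrix}\right)$ in $\GL_2(\ZZ/4\ZZ)$ has centralizer the index-two subgroup cut out by $k(\sqrt{D})$, giving $\lambda_4$ uniformly for all $\charact(k)\neq 2$ and all $j$; your proposal has no valid fallback of this kind. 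For square $D$ your specialization idea differs from the paper's Remark~\ref{R:Dsquare} and is salvageable — the scheme of anti-isometries is finite \'etale over the normal base $\mathrm{Spec}\,k[b,c][(4b^3+27c^2)^{-1}]$, so a section at the generic point extends, and this would in fact also absorb the $b=0$ fibre in characteristic $\neq 2,3$ — but as stated it is an assertion rather than a proof, it presupposes the (unproved) generic case above, and it cannot repair characteristic $3$.
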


\begin{proof} Apart from the proof by specialization that is part of Proposition~\ref{P:X4-Es},
there is also a Galois-representation theoretic way of proving Corollary~\ref{cor:4torsiontwist}. This is interesting because it identifies how $\GL_2(\ZZ/4\ZZ)$ permits such an anti-isometry. Let $E$ be an elliptic curve over a field $k$ with discriminant $D$ and let $\rho\colon\Gal(\kbar/k)\to \Aut(E[4])$ be the mod $4$ Galois representation. We have $\Aut(E[4])\simeq \GL_2(\ZZ/4\ZZ)$. Let $H$ be the subgroup of elements that act via even permutation on the $2$-torsion elements. Note that $D$ is also the discriminant of the $2$-torsion algebra, so $\rho^{-1}(H)=\Gal(\kbar/k(\sqrt{D}))$.

Consider
$$M=\begin{pmatrix}1&2\\2&-1\end{pmatrix}\in\GL_2(\ZZ/4\ZZ)$$
and let $\alpha_M\colon E[4]\to E[4]$ be the corresponding automorphism.
One can check that $\{M,-M\}$ is the unique conjugacy class of $\GL_2(\ZZ/4\ZZ)$ of size $2$ and that the centralizer of $M$ is $H$.
It follows that $\alpha_M$ is defined over $k(\sqrt{D})$.
Furthermore, since
$$M\begin{pmatrix}0&1\\-1&0\end{pmatrix}M^T=\begin{pmatrix}0&-1\\1&0\end{pmatrix}$$
we see that $\alpha_M\colon E[4]\to E[4]$ is an \emph{anti}-isometry.

Now consider the quadratic twist $E^{(D)}$ of $E$. There is an isomorphism $E\to E^{(D)}$ defined over $k(\sqrt{D})$, which when restricted, yields an isometry $\alpha^{(D)}\colon E[4]\to E^{(D)}[4]$. The composition
$\alpha^{(D)}\circ \alpha_M\colon E[4]\to E^{(D)}[4]$ is an anti-isometry.
Furthermore, if $\sigma\in\Gal(\kbar/k)$ and $\sigma(\sqrt{D})=-\sqrt{D}$ then $\sigma(\alpha_M)=-\alpha_M$ and $\sigma(\alpha^{(D)})=-\alpha^{(D)}$. Hence, $\sigma(\alpha^{(D)}\circ\alpha_M)=\alpha^{(D)}\circ\alpha_M$, so we see that $E[4]$ and $E^{(D)}[4]$ are anti-isometric over $k$.
\end{proof}

In Sections~\ref{S:22split} and \ref{S:4x4to2x2}, we already observed that the abelian variety $A$ in \eqref{E:44split} is generally a Jacobian, a sufficient condition being that $j(E_1)\neq j(E_2)$. We now establish that the model $C_2\colon Y^2=g(X)$ with $g(X)$ as in \eqref{E:44split} specializes to a genus $2$ curve such that $A=\Jac(C_2)$ whenever $A$ is a Jacobian.

\begin{lem}\label{T:C2}
Let $\Phi_4\colon E_1\times E_2\to J$ be an optimal $(4,4)$-splitting and let $\Phi_2\colon E_1\times E_2\to A$ be the induced $(2,2)$-splitting as in \eqref{E:44split}. Suppose we have a model $E_1\colon V^2=U^3+bU+c$. If $A=\Jac(C_2)$, where $C_2$ is some genus $2$ curve, then for some $s\in k$ we obtain a model
\begin{equation}\label{eq:C2model}
C_2\colon Y^2=g(X) \text{ with $g(x)$ as in \eqref{E:C2}}.
\end{equation}
Conversely, any $C_2$ of this form admits a $(2,2)$-splitting $\Phi_2$ and a further polarized $(2,2)$-isogeny $\Psi\colon \Jac(C_2)\to J$ such that $\Psi\circ\Phi_2\colon E_1\times E_2\to J$ is an optimal $(4,4)$-splitting.
\end{lem}

\begin{proof}
An optimal $(4,4)$-splitting is specified by an anti-isometry $\alpha_4\colon E_1[4]\to E_2[4]$. It follows from Proposition~\ref{P:X4-Es} that $E_2\simeq E_s$ for some value of $s\in k\cup\{\infty\}$. If $s=\infty$ or $s^3+bs+c=0$, we have $j(E_1)=j(E_2)$ and the induced isometry $E_1[2]\to E_2[2]$ is the obvious one. In this case, $A\simeq E_1\times E_1$ or $A\simeq \Re_{k(\sqrt{d})/k}(E_1)$, see Section~\ref{S:22split}. For all the other cases, the discriminant of the polynomial $g(X)$ defined in \eqref{E:C2} is square-free as long as $j(E_s)\neq \infty$.

For the converse, $\Jac(C_2)$ admits an obvious $(2,2)$-splitting $\Phi_2\colon E_1\times E_2\to \Jac(C_2)$. Furthermore, there are two further $(2,2)$-isogenies $\Psi$ defined on $\Jac(C_2)$ by construction. Let $\phi_1\colon C_2\to E_1$ be the corresponding double cover. It is straightforward to check that $\phi_1^*(E_1[2])\cap \ker(\Psi)=0$ and hence that $\Psi\circ\Phi_2$ is an optimal $(4,4)$-splitting.
\end{proof}

\section{A model for genus \texorpdfstring{$2$}{2} curves with \texorpdfstring{$(4,4)$}{(4,4)}-split Jacobian}\label{S:C4derivation}

The next step is to describe a model for a genus $2$ curve $C_4$ with a $(4,4)$-split Jacobian. From Section~\ref{S:4x4to2x2} we know that $\Jac(C_4)$ is the image under a $(2,2)$-isogeny of a $(2,2)$-split principally polarized abelian surface $A$, admitting three $(2,2)$-isogenies with pairwise trivially intersecting kernels. Whenever $A=\Jac(C_2)$, then Lemma~\ref{T:C2} gives us a model for $C_2$. Section~\ref{S:22maps} provides an explicit description of $(2,2)$-isogenies between Jacobians of genus $2$ curves.

In this section, we will identify the $(2,2)$-isogenies of $\Jac(C_2)$ defined over $k$ and derive a description of the codomain, if it is a Jacobian. This provides us with a description of $C_4$ with $(4,4)$-split Jacobian in case $A=\Jac(C_2)$.

We consider all 15 different quadratic splittings as in Section~\ref{S:22maps} over $\kbar$ and see which are defined over the base field.
As expected, a computer calculation yields that  one of the quadratic splittings is singular. It is
$$
\set{q_2(X-w_1)(X-w_2), q_2(X-w_3)(X-w_4), q_2(X - w_5)(X-w_6)},
$$ 
where $w_i$ are the roots of $g$ over $k[r,R]$ as listed in Appendix~\ref{S:factoredg} and $q_2^3 = f_6$ is the leading coefficient of $g$.  This singular splitting is due to the $(2,2)$-isogeny $\Phi_2^*\colon \Jac(C_2) \longrightarrow E_1 \times E_2$.
We also find that applying the Richelot correspondence \eqref{eq:Cdtilde} to the 14 generically non-singular quadratic splittings produces only two $k$-rational sextics, with the remaining twelve defined over $\bar{k}$, but not over $k$.  The two quadratic splittings which yield the $k$-rational sextics are
\begin{align}
	&\set{q_2(X-w_1)(X-w_6),q_2(X-w_2)(X-w_3),q_2(X - w_4)(X-w_5)} \mbox{ and}\label{L:R1}\\
	&\set{q_2(X-w_1)(X-w_4),q_2(X-w_2)(X-w_5),q_2(X - w_3)(X-w_6)}.\label{L:R2}
\end{align} 

Notice that the singular quadratic splitting, together with the two quadratic splittings \eqref{L:R1} and \eqref{L:R2} come from the three partitionings that are fixed by $\tilde{S_3}$, given by \eqref{E:S3}.  

Let $G_1$ and $G_2$ denote the sextics obtained by applying Richelot's construction \eqref{eq:Cdtilde} of $f$ to the quadratic splittings \eqref{L:R1} and \eqref{L:R2} respectively.  We find that $G_2(X) = G_1(-X)$, and therefore that both models are isomorphic. This reflects that $E_1\times E_2$ has an extra automorphism $[1]\times[-1]$, so if $\Phi_4$ is an optimal $(4,4)$-splitting then $\Phi_4\circ ([1]\times[-1])$ is another one, with the the same codomain.

Proposition~\ref{T:twist} allows us to select the right twist
$$
C_4\colon Y^2 = DG_1(X) = F(X) \text{ where }D = \disc(f) = -4b^3 - 27c^2
$$
(see Appendix~\ref{S:C4} for $F(X)$, with the extraneous factor $f_6^2$ removed). Looking at the denominators and the discriminant of the sextic given in Appendix~\ref{S:C4}, we find
$$
\disc(F) = \frac{2^6\p{s^3 + bs + c}^{22}
		\p{s^6 + 5bs^4 + 20cs^3 - 5b^2s^2 - 4bcs - b^3 - 8c^2}}
	{\p{4b^3 + 27c^2}^{14}\p{3bs^4 + 18cs^3 - 6b^2s^2 - 6bcs - b^3 - 9c^2}^{18}}
$$
and hence
\begin{prop}\label{T:Conditions} 
The model $C_4:Y^2=F(X)$ with $F(X)$ as defined in Appendix~\ref{S:C4} describes a genus $2$ curve unless one of the following holds:
\begin{enumerate}
	\item \label{degen:disc} $4b^3+27c^2=0$,
	\item \label{degen:favanish} $s^6 + 5bs^4 + 20cs^3 - 5b^2s^2 - 4bcs - b^3 - 8c^2 = 0$,
	\item \label{degen:singquadsplit} $3bs^4 + 18cs^3 - 6b^2s^2 - 6bcs - b^3 - 9c^2=0$, or
        \item \label{degen:twotor} $s^3 + bs + c = 0$
	\item \label{degen:sinf} $s=\infty$
\end{enumerate}
The cases \eqref{degen:disc} and \eqref{degen:favanish} correspond to situations where either $E_1$ or $E_s$ is not an elliptic curve. The cases \eqref{degen:singquadsplit} and \eqref{degen:twotor} correspond to $(4,4)$-split principally abelian surfaces that are not Jacobians, as described by Propositions~\ref{P:n-1isog} and \ref{P:44degen} respectively.

If $j(E_1)\neq 0$ then the case~\eqref{degen:sinf} corresponds to a $(4,4)$-splitting
$\Phi\colon E_1\times E_1^{(D)}\to \Jac(C_4')$, where
\begin{equation}\label{ref:Csinfty}
C_4'\colon Y^2=-64bc\frac{1}{D^3}X^6+\frac{64}{3}b\frac{1}{D^2}X^5+16bc\frac{1}{D^2}X^4+\frac{224}{27}b\frac{1}{D}X^3+
4bc\frac{1}{D}X^2+\frac{4}{3}bX-bc,
\end{equation}
is a curve of genus $2$. If $j(E_1)=0$ then case \eqref{degen:sinf} is part of case \eqref{degen:singquadsplit}.
\end{prop}

\begin{proof}
\begin{enumerate}
 \item[\eqref{degen:disc}] In this case $E_1$ is not an elliptic curve.
 \item[\eqref{degen:favanish}] In Proposition~\ref{P:X4-Es} we have already seen that this relation implies $j(E_s)=\infty$.
 \item[\eqref{degen:singquadsplit}] Let $\delta$ denote the determinant of the quadratic splitting \eqref{L:R1}. Then
$$
N_{k[r, R]/k}(\delta) = \p{4b^3 + 27c^2}^2\p{3bs^4 + 18cs^3 - 6b^2s^2 - 6bcs -b^3 - 9c^2}^2,
$$
and we know that if \eqref{E:delta} vanishes, then the codomain of the $(2,2)$-isogeny is a product of elliptic curves over $\kbar$.
Proposition~\ref{P:X3relation} explains this degeneracy.
 \item[\eqref{degen:twotor}] If $s^3+bs+c=0$ then $(s,0)\in E_1[2]$ is a point of order two. Furthermore, from \eqref{E:Param-a} we have $a(s)=\infty$, so the $(2,2)$-splitting $\Phi_2$ through which our $(4,4)$-splitting factors is known to be $E_1\times E_1\to E_1\times E_1$ or $E_1\times E_1^{(D)}\to \Re_{k(\sqrt{R})/k}(E_1)$, depending on whether $D=\disc(E_1)$ is a square or not.
Since $\#\SL_2(\ZZ/4\ZZ)=8\cdot\#\SL_2(\ZZ/2\ZZ)$, there are $8$ ways over $\kbar$ to extend a $(2,2)$-isogeny to a $(4,4)$ isogeny. This also follows from the computation in the proof of Lemma~\ref{L:44iso-kernels}, where one finds $8$ possible kernels for $\Psi$ trivially intersecting $\ker(\Phi^*)$.
Since every value of $s$ gives rise to two $(4,4)$-isogenies, we see that with $s=\infty$ and $s^3+bs+c=0$, all possible $(4,4)$-splittings factoring through $\Phi_2$ must occur for these values of $s$.  Proposition~\ref{P:44degen} describes six $(4,4)$ splittings of this type, so together with the two coming from $s=\infty$, these must be all.
 \item[\eqref{degen:sinf}] The model for $C_4$ as presented does not specialize well for $s=\infty$, but the isomorphic model
$$C_{4,s}'\colon(s^3Y)^2=F(xs^2)/s^6$$
does if $b\neq0$ and for $s=\infty$ we obtain $C_4'$. Note that for generic $s$ we have a $(4,4)$-splitting
\[
\Phi_4\colon E_1\times E_s \to \Jac(C_{4,s}')
\]
where the kernel is the graph of an anti-isogeny $E_1[4]\to E_s[4]$ that is independent of $s$. Since domain and codomain specialize well at $s=\infty$, so must the $(4,4)$-isogeny.
If $b=0$ we have $j(E_1)=0$ and we have a $3$-isogeny $E_1\to E_1^{(D)}$, so case \eqref{degen:singquadsplit} applies.
\end{enumerate}
\end{proof}

\begin{prop}\label{P:X3relation}
Let $E$ and $E_s$ be the elliptic curves described in Proposition~\ref{P:X4-Es}. The
relation $3bs^4 + 18cs^3 - 6b^2s^2 - 6bcs -b^3 - 9c^2=0$ corresponds to the existence of a $3$-isogeny $\phi\colon E\to E_s$.
\end{prop}

\begin{proof}
Note that $J_1=j(E)$ and $J_2=j(E_s)$ are rational functions in $s,b,c$. We can express the given information in weighted homogeneous polynomial relations in $s,b,c$ with weights $(1,2,3)$ and obtain
\begin{align*}
1728b^3-(b^3 + 27c^2/4)J_1&=0\\
N(b,c,s)-D(b,c,s)J_2&=0\\
3bs^4 + 18cs^3 - 6b^2s^2 - 6bcs -b^3 - 9c^2&=0.
\end{align*}
When we eliminate $b,c,s$ from these equations, we are left with the classical modular polynomial of level $3$ in $J_1,J_2$. This means that there is a $3$-isogeny $\phi\colon E\to E_s$ over $\kbar$.
Indeed, in the light of Proposition~\ref{P:n-1isog} we expect to find $(4,4)$-split surfaces of this type that are not Jacobians.
A priori, the fact that the $j$-invariants satisfy a modular polynomial only tells us that $E$ and $E_s$ are $3$-isogenous over $\kbar$. However, we know that only one twist of $E_s$ has $E_s[4]$ anti-isometric to $E[4]$ and similarly, only one twist of $E_s$ can be $3$-isogenous to $E$. From Proposition~\ref{P:n-1isog} we know they must coincide.

Furthermore, in general there are only $2$ anti-isometries between $E[4]$ and $E_s[4]$ for $3$-isogenous curves (otherwise $E[4]$ would have extra automorphisms, requiring the Galois representation to be small). This implies that the anti-isometries induced by our parametrization of $X_{E}^{-}(4)$ must coincide with the ones from Proposition~\ref{P:n-1isog} generally and therefore also for any valid specialization of $b,c,s$.
\end{proof}


\section{Proofs of Theorems~\ref{T:44splitclassification} and \ref{T:main}}\label{S:Proof}

\begin{proof}[Proof of Theorem~\ref{T:44splitclassification}]
In Section~\ref{S:4x4to2x2} we established that a $(4,4)$-splitting factors $\Phi_4$ as
\[\xymatrix{
E_1\times E_2\ar[r]^-{\Phi_2}\ar@/_2ex/[rr]_-{\Phi_4} &A\ar[r]^\Psi& J,
}\]
Let $E_1\colon V^2=U^3+bU+C$ be a model for $E_1$.
If $A$ is a Jacobian then Lemma~\ref{T:C2} provides a model for $C_2$ such that $A=\Jac(C_2)$ and if $J$ is a Jacobian $\Jac(C_4)$ as well, then Section~\ref{S:C4derivation} shows that \eqref{E:C4} provides a model for $C_4$.

More generally, Corollary~\ref{C:sline-44split-modspace} describes that $b,c,s$ together parametrize all $J$ with optimal $(4,4)$-splitting. Proposition~\ref{T:Conditions} analyzes all the degeneracies of $C_4$ and identifies which correspond to the $(4,4)$-splittings described by Propositions~\ref{P:44degen} and \ref{P:X3relation}. Together, these give the cases listed in Theorem~\ref{T:44splitclassification}.
\end{proof}

\begin{proof}[Proof of Theorem~\ref{T:main}] Recall that the moduli space of genus $2$ curves is birational to $\AA^3$ and that $(i_1,i_2,i_3)$ as given in \eqref{E:AbsInv}, give coordinates on that space. 

Let $\mathcal{X}$ denote the surface inside $\AA^3$ describing genus 2 curves with $(4,4)$-split Jacobians.  This surface is the Humbert surface of discriminant 16 and it is irreducible (see \cite[Corollaries~1.6-1.8]{MR1285957} and \cite{MR1285952}).  

Theorem~\ref{T:44splitclassification} and Proposition~\ref{T:Conditions} show that by specializing $b,c,s$ we can generate points on a Zariski-open part of $\mathcal{X}$. In fact, if we set $b=1$, the points we can generate still lie dense in $\mathcal{X}$. By computing the Igusa invariants of $C_4$ we obtain rational functions $i_1(c,s),i_2(c,s),i_3(c,s)\in \QQ(c,s)$ such that the image of the rational map
\[
\begin{array}{ccc}
\AA^2&\dashrightarrow&\AA^3\\
(c,s)&\mapsto&(i_1(c,s),i_2(c,s),i_3(c,s))
\end{array}
\]
lies dense in $\mathcal{X}$. The defining equations are too large to compute the image using Gr{\"o}bner bases or resultants. Instead, we will compute the image by interpolation. Our strategy consists of three steps.
\begin{enumerate}
 \item Determine a candidate equation $\LL(i_1,i_2,i_3)=0$ to describe $\mathcal{X}$,
 \item prove that $\mathcal{X}$ is contained in $\LL(i_1,i_2,i_3)=0$,
 \item observe that if the Zariski-closure of $\mathcal{X}$ is a proper subset of $\LL(i_1,i_2,i_3)=0$ then $\mathcal{X}$ must lie on a surface of lower degree and derive a contradiction from that.
\end{enumerate}

For (1), we guessed degree bounds with which to interpolate $\LL$ and
computed a tentative version $\LL_{p_i}(i_1,i_2,i_3)\pmod{ p_i}$ via interpolation, for
93 consecutive 6-digit primes $p_i$.
For future reference, note that we found a unique solution to the system for each prime $p_i$.

We then used rational reconstruction to compute a tentative equation $\LL(i_1,i_2,i_3)=0$ over $\QQ$.
The equation of the surface is too large to reproduce here: $\LL$ contains 4574 monomials with coefficients of up to 138 digits. Note that the information we computed should allow us to construct $\LL \pmod {N}$, where $N = \prod_{i=1}^{93}p_i \approx 10^{600}$, so the coefficients we found in $\LL$ are relatively tiny. This is a strong indicator that we have computed something that indeed has intrinsic meaning over $\QQ$ (at this point, basically what could go wrong is that our degree bound is too low and that we very unluckily have picked interpolation points that happen to map to points satisfying some lower degree equation as well).

For (2), we show that $\LL(i_1(c,s),i_2(c,s),i_3(c,s))$ is  identically zero in $\QQ(c,s)$.
The expression $\LL\p{i_1(c,s), i_2(c,s), i_3(c,s)} = 0$ gives rise, after clearing denominators, to a polynomial $p(c,s)$ of degrees at most 1800 and 4050 in $c$ and $s$ respectively. We need to establish that $p(c,s)=0$ as a bivariate polynomial.
Expanding $p(c,s)$ explicitly is computationally infeasible, so instead, we evaluate $p(c,s)$ over a large number of distinct values for $c$ and $s$.  For a fixed value $s = s_0$, if we show that $p(c, s_0) = 0$ at 1801 distinct values for $c$, then $p(c, s_0)$ is the zero polynomial on the line $s = s_0$.  If we repeat this process on 4501 distinct lines $s = s_i$ then $p(c,s)$ is in fact the zero polynomial.  This calculation was performed in parallel on multiple computers over the course of several weeks.

For (3), note that we have now established that the Zariski-closure of $\mathcal{X}$ is indeed contained in $\LL(i_1,i_2,i_3)=0$. Since $\mathcal{X}$ is irreducible (see \cite[Corollary~1.8]{MR1285957}), proper containment implies that $\mathcal{X}$ must be described by an equation of strictly lower degree. But then we would have found this lower degree equation in step (1) as well. However, we found there that $\LL_{p_i}$ was the unique equation below the guessed degree bounds that interpolated the computed images. So $\mathcal{X}$ does not lie in a lower degree surface.
\end{proof}


\appendix
\section{On a classical result by Bolza}\label{S:Bolza}

An 1887 paper by O.\ Bolza \cite{Bolza1887} discusses hyperelliptic integrals which can reduce into elliptic integrals by a fourth degree transformation.  In the terminology of Section~\ref{sec:nnsplit}, he computes a model of a genus 2 curve with a $(4,4)$-split Jacobian. In this section we relate his results to ours. The formulas given here are available electronically from \cite{equation}.
Bolza works over $\CC$. 
He gives a $3$-parameter family of curves $y^2 = R(x)$, with parameters $\lambda,\mu,\nu$, with a sign error in the equation \eqref{Bolza:nu} below. Corrected, Bolza's family is given by:
\begin{align*}
	C_{(\lambda,\mu,\nu)}\colon y^2=R(x) = \nu'x^6 &- 6\lambda\nu'x^5 
		+ 3\p{4 \mu \nu' + \lambda \mu'}x^4 + 2\p{\lambda\lambda' + 5\nu\nu'}x^3\\
	&+ 3\p{4\mu'\nu + \lambda'\mu}x^2 - 6\lambda'\nu x + \nu,
\end{align*}
where
\begin{align}
	\lambda' &= -\frac{1}{3}\cdot\frac{2\lambda^2\nu - \lambda\mu^2 - \mu\nu}
			{-\nu^2 + 3\lambda\mu\nu - 2\mu^3},\label{Bolza:lambda}\\
	\mu' &= \phantom{-}\frac{1}{9}\cdot\frac{\lambda^2\mu + \lambda\nu - 2\mu^2}
			{-\nu^2 + 3\lambda\mu\nu - 2\mu^3},\label{Bolza:mu}\\
	\nu' &= -\frac{1}{27}\cdot\frac{2\lambda^3 - 3\lambda\mu + \nu}
			{-\nu^2 + 3\lambda\mu\nu - 2\mu^3}.\label{Bolza:nu}
\end{align}
He also gives the variable substitutions that turn the hyperelliptic integrals into elliptic integrals. In modern language, he gives the degree $4$ maps from the curve $C_{(\lambda,\mu,\nu)}$ to two elliptic curves. Since Bolza is only interested in curves over $\CC$, he does not care to determine the appropriate twist, but this is easily adjusted. With
$$z_1=\frac{\lambda x^4+4\lambda\nu x+3\mu\nu}{\lambda x^2+2\lambda x+\frac{3\mu\lambda-2\nu}{2}},
\quad
z_2=\frac{\lambda'+4\lambda'\nu' x^3+3\mu'\nu'x^4}{x^2(\lambda'+2\lambda' x+\frac{3\mu'\lambda'-2\nu'}{2}x^2)}$$
we find that $C_{(\lambda,\mu,\nu)}$ covers the two curves
\[
\begin{split}
E_{1,(\lambda,\mu,\nu)}\colon w_1^2=\lambda R_1(z_1)=\lambda&(\lambda z_1 -2\nu) (\nu'z_1^3-3(9\lambda^2\nu'-6\mu\nu'-\lambda\mu')z_1^2\\
&+12(9\lambda\nu\nu'+3\mu'\nu+\lambda'\mu)z_1+12\nu(3\mu\mu'-\lambda\lambda'))
\end{split}
\]
and
\[
\begin{split}
E_{2,(\lambda,\mu,\nu)}\colon w_2^2=\lambda' R_2(z_2)=\lambda'&(\lambda' z_2 -2\nu')
(\nu z_2^3-3(9\lambda'^2\nu-6\mu'\nu-\lambda'\mu)z_2^2\\
&+12(9\lambda'\nu'\nu+3\mu\nu'+\lambda\mu')z_2+12\nu'(3\mu'\mu-\lambda'\lambda))
\end{split}.
\]
Checking this is straightforward by verifying that $\lambda R_1(z_1)R(x)$ and $\lambda' R_2(z_2)R(x)$ are squares in $\QQ(\lambda,\mu,\nu)(x)$.

In order to find the relation between Bolza's family and the model \eqref{E:C4}, we put $E_{1,(\lambda,\mu,\nu)}$ in short Weierstrass form $V^2=U^3+bU+c$, where
\[
\begin{split}
b=3 (\nu^2 - 3 \nu \mu \lambda + 2 \mu^3)^2&(2 \nu^4 \mu - 5 \nu^4 \lambda^2 + 2 \nu^3 \mu \lambda^3 + 16 \nu^3 \lambda^5 - \nu^2 \mu^4 +\\
&10 \nu^2 \mu^3 \lambda^2 - 45 \nu^2 \mu^2 \lambda^4 - 6 \nu \mu^5 \lambda + 36 \nu \mu^4 \lambda^3 - 9 \mu^6 \lambda^2)\\
c=(\nu^2 - 3 \nu \mu \lambda + 2 \mu^3)^3&(\nu^7 - 3 \nu^6 \mu \lambda - 10 \nu^6 \lambda^3 - 10 \nu^5 \mu^3 + 84 \nu^5 \mu^2 \lambda^2 -138 \nu^5 \mu \lambda^4 + 160 \nu^5 \lambda^6 -\\
&30 \nu^4 \mu^4 \lambda + 68 \nu^4 \mu^3 \lambda^3 -
78 \nu^4 \mu^2 \lambda^5 - 288 \nu^4 \mu \lambda^7 - 2 \nu^3 \mu^6 + 30 \nu^3 \mu^5 \lambda^2 -\\
&189 \nu^3 \mu^4 \lambda^4 + 738 \nu^3 \mu^3 \lambda^6 - 18 \nu^2 \mu^7 \lambda + 198 \nu^2 \mu^6 \lambda^3 -
729 \nu^2 \mu^5 \lambda^5 -\\
&54 \nu \mu^8 \lambda^2 + 324 \nu \mu^7 \lambda^4 - 54 \mu^9 \lambda^3).
\end{split}
\]
We compute the linear transformation $U=\frac{t_{1}z_2+t_{2}}{t_3z_2+t_4}$ such that $\lambda'R_2(z_2)=d(U-a)(U^3+bU+c)$, where $d$ is specified up to squares, and find
\[
\begin{split}
 a&=\frac{(\nu^2 - 3 \nu \mu \lambda + 2 \mu^3) (2 \nu^3 \lambda - 3 \nu^2 \mu^2 - 4 \nu^2 \lambda^4 + 2 \nu \mu^3 \lambda + 6 \nu \mu^2 \lambda^3 - 3 \mu^4 \lambda^2)}{\mu \lambda-\nu}\\
d&=3 (\nu - \mu \lambda) (\nu^2 - 3 \nu \mu \lambda + 2 \mu^3) (\nu^2 - 6 \nu \mu \lambda + 4 \nu \lambda^3 + 4 \mu^3 - 3 \mu^2 \lambda^2).
\end{split}
\]
From $a=\frac{s^4-2bs^2-8cs+b^4}{4(s^3+bs+c)}$ one finds one rational choice:
$$s=\frac{(\nu^2 - 3 \nu \mu \lambda + 2 \mu^3) (\nu^3 \lambda + 3 \nu^2 \mu^2 - 18 \nu^2 \mu \lambda^2 + 16 \nu^2 \lambda^4 + 10 \nu \mu^3 \lambda -
15 \nu \mu^2 \lambda^3 + 3 \mu^4 \lambda^2)}{\nu-\mu \lambda}.$$
This shows that outside $(\nu - \mu \lambda) (\nu^2 - 3 \nu \mu \lambda + 2 \mu^3)=0$, Bolza's family
maps to the family \eqref{E:C4}. The relation turns out to be birational: both $(\lambda:\mu:\nu)$ and $(s:b:c)$ are naturally coordinates on weighted projective space $\PP(1,2,3)$. The formulae above express $(b/s^2,c/s^3)$ as functions in $(\mu/\lambda^2,\nu/\lambda^3)$. Via the appropriate resultant computations and polynomial factorizations, we find
\[
\begin{split}
\psi(b,c,s)&=2 b^6 + 36 b^5 s^2 + 45 b^4 c s + 72 b^4 s^4 + 45 b^3 c^2 + 36 b^3 c s^3 - 36 b^3 s^6 + 297 b^2 c^2 s^2 - 378 b^2 c s^5\\
& + 54 b^2 s^8 + 324 b c^3 s - 81 b c^2 s^4 + 324 b c s^7 + 216 c^4 - 324 c^3 s^3 + 891 c^2 s^6 - 27 c s^9\\
\frac{\mu}{\lambda^2}&=\frac{(2 b^4 - 15 b^2 c s + 30 b^2 s^4 + 9 b c^2 + 90 b c s^3 + 135 c^2 s^2 -
27 c s^5)\psi(b,c,s)}
{3 (b s + c + s^3)^2 (b^2 - 6 b s^2 - 12 c s - 3 s^4)^2 (4 b^3+27 c^2)}\\
\frac{\nu}{\lambda^3}&=\frac{-\psi(b,c,s)^2}
{(b s + c + s^3)^2 (b^2 - 6 b s^2 - 12 c s - 3 s^4)^3 (4 b^3+27 c^2)}
\end{split}
\]
This shows that outside some codimension one locus, the two families parametrize the same curves up to twist. Note, however, that the formulas for $a,b,c,d$ are of weighted total degrees $13,26,39,15$ in $(\lambda,\mu,\nu)$.
That means that with appropriate scaling, we can adjust the square class of $d$, so the two families really do parametrize essentially the same curves.

\section{The six roots of the defining polynomial for \texorpdfstring{$C_2$}{C2}}\label{S:factoredg}

Let $C_2$ be a genus 2 curve over $k$ which is $(2, 2)$-isogenous to a genus 2 curve whose Jacobian is optimally $(4, 4)$-split (see Lemma \ref{T:C4andC2}).  Then $C_2$ is a degree 2 cover of an elliptic curve $E_1$ which admits a model $V^2 = f(U) = U^3 + bU + c$.  A model for $C_2$ is given in \eqref{E:C2}.
$$
f(U)  = (U - r) \left(U^2 + rU + \left(r^2 + b\right)\right)
$$
Over $k[r,R]:=k[r][U]/\left[U^2 - \left(-3r^2 - 4b\right)\right]$, we have the factorisation
$$
f(U) = (U - r)\p{U - \frac{R}{2} + \frac{r}{2}}\p{U + \frac{R}{2} + \frac{r}{2}}.
$$
Using our parametrization for $a$ and $d$ given in equations \eqref{E:Param-a} and \eqref{E:Param-d} respectively, we can write down the factorization for $g$ over $k[r,R]$: 
\begin{equation}\label{E:factoredg}
g(X) = f_6\prod_{i=1}^{6} (X - w_i)
\end{equation}
where $$f_6 =  \p{\frac{1}{-\disc(f)\cdot f(s)}}^3 = \frac{1}{\p{4b^3  + 27c^2}^3\p{s^3 + bs + c}^3}$$
and:
{\allowdisplaybreaks
\begin{align*}
	\begin{split}
	w_1 = {}\frac{1}{2}&\Big(\p{-3s^2 - b}r^2 + \p{-4bs - 6c}r - bs^2 - 6cs + b^2\Big)R\\
	\end{split}\\
	\begin{split}
	w_2 = {}\frac{1}{2}&\Big(\p{3s^2 + b}r^2 + \p{4bs + 6c}r + bs^2 + 6cs - b^2\Big)R\\
	\end{split}\\
	\begin{split}
	w_3 = {}\frac{1}{2}&\Big(\p{-3s^2 - b}r^2 + \p{2bs + 3c}r - bs^2 + 3cs - b^2\Big)R\\
		&+ \frac{1}{2}\Big(\p{-3bs - 9c}r^2 + \p{9cs - 2b^2}r - 4b^2s - 6bc\Big)\\
	\end{split}\\
	\begin{split}
	w_4 = {}\frac{1}{2}&\Big(\p{3s^2 + b}r^2 + \p{-2bs - 3c}r + bs^2 - 3cs + b^2\Big)R\\
		&+ \frac{1}{2}\Big(\p{3bs + 9c}r^2 + \p{-9cs + 2b^2}r + 4b^2s + 6bc\Big)\\
	\end{split}\\	
	\begin{split}
	w_5 = {}\frac{1}{2}&\Big(\p{-3s^2 - b}r^2 + \p{2bs + 3c}r - bs^2 + 3cs - b^2\Big)R\\		&+ \frac{1}{2}\Big(\p{3bs + 9c}r^2 + \p{-9cs + 2b^2}r + 4b^2s + 6bc\Big)\\
	\end{split}\\
	\begin{split}
	w_6 = {}\frac{1}{2}&\Big(\p{3s^2 + b}r^2 + \p{-2bs - 3c}r + bs^2 - 3cs + b^2\Big)R\\
		&+ \frac{1}{2}\Big(\p{-3bs - 9c}r^2 + \p{9cs - 2b^2}r - 4b^2s - 6bc\Big)\\
	\end{split}
\end{align*}}

\section{A representation for a \texorpdfstring{$(4,4)$}{(4,4)}-split genus 2 curve}\label{S:C4}

Let $E_1$ be an elliptic curve over $k$ given by $V^2 = U^3 + bU + c$ for scalars $b$ and $c$ and let $C_4$ be a genus 2 curve which is a degree 4 cover of $E_1$.  Then there exists a scalar $s$ such that a representation for $C_4$ is given by $Y^2 = F(X)$ where:

\begin{equation}\label{E:C4}
\begin{aligned}
	F(X) = {} &\frac{\p{s^3 + bs + c}\p{27cs^3 - 18b^2s^2 - 27bcs - 2b^3 - 27c^2}}
		{\p{4b^3 + 27c^2}^3\p{3bs^4 + 18cs^3 - 6b^2s^2 - 6bcs - b^3 - 9c^2}^3}X^6\\
		& + \frac{3\p{s^3 + bs + c}^2\p{3s^2 + b}}
		{\p{4b^3 + 27c^2}^2\p{3bs^4 + 18cs^3 - 6b^2s^2 - 6bcs - b^3 - 9c^2}^3}X^5\\
		& + \frac{3\p{s^3 + bs + c}E(b,c,s)}
		{4\p{4b^3 + 27c^2}^2\p{3bs^4 + 18cs^3 - 6b^2s^2 - 6bcs - b^3 - 9c^2}^3}X^4\\
		& + \frac{-\p{s^3 + bs + c}^2G(b,c,s)}
		{2\p{4b^3 + 27c^2}\p{3bs^4 + 18cs^3 - 6b^2s^2 - 6bcs - b^3 - 9c^2}^3}X^3\\
		& + \frac{-\p{s^3 + bs + c}H(b,c,s)}
		{16\p{4b^3 + 27c^2}\p{3bs^4 + 18cs^3 - 6b^2s^2 - 6bcs - b^3 - 9c^2}^3}X^2\\
		& + \frac{3\p{s^3 + bs + c}^2\p{3s^4 + 6bs^2 + 12 cs - b^2}J(b,c,s)}
		{16\p{3bs^4 + 18cs^3 - 6b^2s^2 - 6bcs - b^3 - 9c^2}^3}X\\
		& + \frac{-\p{s^3 + bs + c}J(b,c,s)K(b,c,s)}
		{64\p{3bs^4 + 18cs^3 - 6b^2s^2 - 6bcs - b^3 - 9c^2}^3}
\end{aligned}
\end{equation}
and where

\begin{align*}
	\begin{split}
		E(b,c,s) = {}&9cs^7 - 26b^2s^6 - 171bcs^5 + 34b^3s^4 - 333c^2s^4 + 155b^2cs^3 - 6b^4s^2
		 + 126bc^2s^2\\ &+ 7b^3cs + 144c^3s - 2b^5 - 17b^2c^2
	\end{split}\\
	G(b,c,s) = {} &7s^6 + 23bs^4 + 68cs^3 - 11b^2s^2 - 4bcs - 3b^3 - 20c^2\\
	\begin{split}
	H(b,c,s) = {} &27cs^{11} + 6b^2s^{10} + 585bcs^9 - 402b^3s^8 + 2349c^2s^8 - 3330b^2cs^7 + 
        460b^4s^6\\
        & - 6156bc^2s^6 + 1410b^3cs^5 - 7776c^3s^5 + 140b^5s^4 + 
        4230b^2c^2s^4 + 23b^4cs^3\\
        & + 3024bc^3s^3 + 46b^6s^2 + 516b^3c^2s^2 + 
        3024c^4s^2 + 5b^5cs - 48b^2c^3s + 6b^7\\ &+ 85b^4c^2 + 288bc^4
    \end{split}\\
    J(b,c,s) = {} &s^6 + 5bs^4 + 20cs^3 - 5b^2s^2 - 4bcs - b^3 - 8c^2\\
	\begin{split}
	K(b,c,s) = {}&27cs^9 - 54b^2s^8 - 324bcs^7 + 36b^3s^6 - 891c^2s^6 + 378b^2cs^5 - 72b^4s^4\\
		&+ 81bc^2s^4 - 36b^3cs^3 + 324c^3s^3 - 36b^5s^2 - 297b^2c^2s^2 - 45b^4cs - 
        324bc^3s\\ &- 2b^6 - 45b^3c^2 - 216c^4
	\end{split}
\end{align*}

\bibliographystyle{amsplain}

\providecommand{\bysame}{\leavevmode\hbox to3em{\hrulefill}\thinspace}
\renewcommand{\MR}{\relax\ifhmode\unskip\space\fi}
\providecommand{\MRhref}[2]{%
  \href{http://www.ams.org/mathscinet-getitem?mr=#1}{#2}
}
\providecommand{\href}[2]{#2}

\providecommand{\bysame}{\leavevmode\hbox to3em{\hrulefill}\thinspace}
\providecommand{\MR}{\relax\ifhmode\unskip\space\fi MR }
\providecommand{\MRhref}[2]{%
  \href{http://www.ams.org/mathscinet-getitem?mr=#1}{#2}
}
\providecommand{\href}[2]{#2}

\end{document}